\begin{document}
\makeatletter
\def\@begintheorem#1#2{\trivlist \item[\hskip \labelsep{\bf #2\ #1.}] \it}
\def\@opargbegintheorem#1#2#3{\trivlist \item[\hskip \labelsep{\bf #2\ #1}\ {\rm (#3).}]\it}
\makeatother
\newtheorem{thm}{Theorem}[section]
\newtheorem{alg}[thm]{Algorithm}
\newtheorem{conj}[thm]{Conjecture}
\newtheorem{lemma}[thm]{Lemma}
\newtheorem{defn}[thm]{Definition}
\newtheorem{cor}[thm]{Corollary}
\newtheorem{exam}[thm]{Example}
\newtheorem{prop}[thm]{Proposition}
\newenvironment{proof}{{\sc Proof}.}{\rule{3mm}{3mm}}

\title{On Factor-Invariant Graphs With Two Cycles}
\author{Brian Alspach\\School of Mathematical and Physical Sciences\\
University of Newcastle\\Callaghan, NSW 2308\\
Australia\\brian.alspach@newcastle.edu.au\\ \\
Ted Dobson\\Department of Mathematics\\University of Primorska\\
Koper, Slovenia\\ted.dobson@upr.si\\ \\
Afsaneh Khodadadpour\\Department of Mathematical Sciences\\
Isfahan University of Technology\\Isfahan, Iran\\a.khodadadpour@math.iut.ac.ir\\ \\
Primo\v{z} \v{S}parl\\Faculty of Education\\University of Ljubljana\\
Ljubljana, Slovenia\\Institute Andrej Maru\v{s}i\v{c}\\University of Primorska\\Koper, Slovenia\\
Institute of Mathematics, Physics and Mechanics\\Ljubljana, Slovenia\\
Primoz.Sparl@pef.uni-lj.si}

\maketitle

\begin{abstract} We classify trivalent vertex-transitive graphs whose edge sets have a partition
into a 2-factor composed of two cycles and a 1-factor that is invariant under the action of the
automorphism group.
\end{abstract}

{\it keywords}: invariant partition, vertex-transitive, generalized Petersen graph.

\section{Introduction}

All graphs in this paper have neither multiple edges nor loops.  A $d$-{\it factor} in a graph $X$
is a spanning subgraph in which each vertex has valency $d$.  A
$(d_1,d_2,\ldots,d_t)$-{\it factorization} of a graph $X$ is a partition of
the edge set $E(X)$ into $d_i$-factors for $i=1,2,\ldots,t$.

\begin{defn} {\em A vertex-transitive graph $X$ is called $\mathcal{F}(d_1,d_2,\ldots,d_t)$-}invar\\iant
{\em when it admits a $(d_1,d_2,\ldots,d_t)$-factorization $\mathcal{F}$ that is invariant under
$\mathrm{Aut}(X)$, that is, each factor of $\mathcal{F}$ is mapped to itself by every element of
$\mathrm{Aut}(X)$.}
\end{defn}

In December 2016 Bojan Mohar asked the first author if one can characterize $\mathcal{F}(1,2)$-invariant
graphs.  The $\mathcal{F}(1,2)$-invariant graphs for which the 2-factor is a Hamilton cycle were characterized
in \cite{A1}.  In this paper we consider the case that the 2-factor is composed of two cycles.  Lest the reader
be concerned that this is the second in a seemingly infinite sequence of papers dealing with an increasing
number of cycles comprising the 2-factor, we point out that one cycle and two cycles are unique situations.  
The invariant 1-factor is a set of chords when the 2-factor is a Hamilton cycle and it is the only time an
edge of the 1-factor may be a chord.  Similarly, when there are two cycles, the 1-factor forms a perfect
matching between the two cycles.  When the 2-factor consists of three or more cycles, then the 1-factor
is split amongst the cycles and the problem becomes more complicated.  Of course, the preceding comments
refer to connected graphs because it suffices to solve the problem for connected graphs. 
This follows because the components of a disconnected vertex-transitive graph are
mutually isomorphic.

\begin{prop} If $X$ is an $\mathcal{F}(1,2)$-invariant graph and $F$ is the corresponding
2-factor, then the cycles of $F$ have the same length.
\end{prop}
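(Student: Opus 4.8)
The plan is to exploit the fact that every automorphism of $X$ fixes $F$ setwise and hence restricts to an automorphism of the graph $F$ itself. First I would observe that since $F$ is a $2$-factor, it is a vertex-disjoint union of cycles whose vertex sets partition $V(X)$; write these cycles as $C_1,C_2,\ldots,C_k$. Because each $\varphi\in\mathrm{Aut}(X)$ satisfies $\varphi(F)=F$, it maps edges of $F$ to edges of $F$, so the restriction of $\varphi$ to $V(X)$ is an automorphism of the graph $F$ and in particular permutes its connected components $C_1,C_2,\ldots,C_k$.

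Next I would bring in vertex-transitivity. Given any two of these cycles, say $C_i$ and $C_j$, pick vertices $u\in V(C_i)$ and $v\in V(C_j)$ and choose $\varphi\in\mathrm{Aut}(X)$ with $\varphi(u)=v$. By the previous paragraph, $\varphi(C_i)$ is a component of $F$, and it contains $\varphi(u)=v$; since the cycles of $F$ partition $V(X)$, the only component of $F$ through $v$ is $C_j$, so $\varphi(C_i)=C_j$. As $\varphi$ is a bijection on $V(X)$, this forces $|V(C_i)|=|V(C_j)|$, and since $i$ and $j$ were arbitrary, all cycles of $F$ have the same length.

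This argument is short, and I do not anticipate a genuine obstacle; the only point requiring a little care is the step from ``$\varphi$ fixes $F$ setwise'' to ``$\varphi$ permutes the components of $F$'', which is exactly where the $\mathrm{Aut}(X)$-invariance of $F$ (as opposed to mere vertex-transitivity of $X$) is used. Note also that the $1$-factor $M$ of the factorization plays no role in this proof.
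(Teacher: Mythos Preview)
Your proof is correct and follows essentially the same approach as the paper's own proof, which is a one-line observation that $\mathrm{Aut}(X)$ acts transitively on $V(X)$ and preserves the partition of $V(X)$ into the vertex sets of the cycles of $F$. You have simply spelled out in detail the step from ``$F$ is $\mathrm{Aut}(X)$-invariant'' to ``$\mathrm{Aut}(X)$ permutes the cycles of $F$'', which the paper leaves implicit.
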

\begin{proof} The result follows because $\mathrm{Aut}(X)$ acts transitively on $V(X)$ and preserves
the partition of $V(X)$ corresponding to $F$.    \end{proof}

\medskip

For the rest of the paper, $X$ will be an $\mathcal{F}(1,2)$-invariant graph whose corresponding 2-factor is
composed of two $n$-cycles $C_1$ and $C_2$ joined by the 1-factor of the invariant partition.
Moreover, the vertices of $C_1$ are labelled $u_0,u_1,\ldots,u_{n-1}$ so that $[u_i,u_{i+1}]$
is an edge of $C_1$, $0\leq i\leq n-1$, where the subscript calculations are carried out modulo $n$. 
The vertices of $C_2$ are labelled $v_0,v_1,\ldots,v_{n-1}$ so that $[u_i,v_i]$, $0\leq i\leq n-1$,
are the edges of the 1-factor.  For convenience, we use $U$ and $V$ to denote the sets of
vertices of $C_1$ and $C_2$, respectively.

We are not able to specify the edges of $V$, but because of the preceding labelling, we know a
lot about any automorphism of $X$ that fixes $U$ setwise.  In particular, if $f\in\mathrm{Aut}(X)$
maps $U\mbox{ to }U$ and $f(u_i)=u_j$, then $f(v_i)=v_j$ must hold. This follows easily
because each vertex $u_i\in U$ has the unique neighbor $v_i\in V$.  We use the {\it faithful}
language here.  That is, we say the restriction of $f$ to $U$ is the {\it faithful restriction} of $f$,
and we say that $f$ is the {\it faithful extension} of its restriction to $U$. 

It is important to point out that throughout the paper we use $D_n$ to denote the dihedral group
of degree $n$ and order $2n$, where $D_n=\langle\rho,\tau\rangle$, $\rho^n=\tau^2=1\mbox{ and }
\tau\rho\tau=\rho^{-1}$.  As permutations, we have \[\rho=(u_0\;u_1\;\cdots\;u_{n-1})
\mbox{ and }\tau=(u_0)(u_1\;u_{n-1})\cdots(u_{(n-2)/2}\;u_{(n+2)/2})(u_{n/2}),\]
when $n$ is even.

\section{Main Theorem}

We give the main theorem in this section, but first we need to introduce the cast of characters
involved in the main result.  There are three families of graphs that appear and we define them all
in spite of the fact two of them are well-known families.

The generalized Petersen graph $\mathrm{GP}(n,k)$ is defined as follows.  Its vertex set is
$\{u_0,u_1,\ldots,u_{n-1}\}\cup\{v_0,v_1,\ldots,v_{n-1}\}$. (It is not an accident that we 
are using the same vertex notation as used for $X$.)  The edges are the same as the description
for $X$ earlier with the addition of the edges $[v_i,v_{i+k}]$, $0\leq i\leq n-1$, where subscripts
again are calculated modulo $n$.

We require a subfamily of the honeycomb toroidal graphs so that we give a definition based
on our requirements rather than the standard definition given in \cite{A3}.
The {\it honeycomb toridal graph} $\mathrm{HTG}(2,n,\ell)$ has vertex set
$\{u_0,u_1,\ldots,u_{n-1}\}\cup\{v_0,v_1,\ldots,v_{n-1}\}$, where $n\geq 4$, $0\leq
\ell<n$ and both $n$ and $\ell$ are even.  The edges $[u_i,u_{i+1}]$ and $[v_i,v_{i+1}]$
are present so that we have two
$n$-cycles.  The edges between the two cycles are: $[u_i,v_i]$ for $i$ odd and $[u_{i+\ell},v_i]$ for
$i$ even.  The latter edges are called {\it jump edges}.

The third family of graphs contains graphs such that the edges in $V$ are described by two
odd integers.  The graph  $\mathcal{M}_O(n,a,b)$, where $n\geq 4$ is even and both
$a\mbox{ and }b$ are odd with $0<a<b<n$, has vertex set
$\{u_0,u_1,\ldots,u_{n-1}\}\cup\{v_0,v_1,\ldots,v_{n-1}\}$.  The edges are
the same as the description for $X$ earlier together with the following edges in $V$.  For each
even value of $i$, it has the edges $[v_i,v_{i+a}]\mbox{ and }[v_i,v_{i+b}]$. 
\smallskip

We now describe a collection $\mathcal{C}$ of graphs via the following list.

\begin{itemize}
\item All generalized Petersen graphs
$\mathrm{GP}(n,k)$ for which $k^2\equiv\pm 1(\mbox{mod }n)$ belong to
$\mathcal{C}$ with the exception of those for which $(n,k)$ is in the following list:  $$(4,\pm 1),
(5,\pm 2),(8,\pm 3),(10,\pm 3),(12,\pm 5)\mbox{ and }(24,\pm 5).$$
\item The honeycomb toroidal graphs $\mathrm{HTG}(2,n,\pm\ell), 0<\ell<n/2$, belong to
$\mathcal{C}$ with the exception of those satisfying any one of the following conditions:
\begin{enumerate}
\item $\mathrm{gcd}(n,\ell+2)=4$ and $4n|(\ell^2+4\ell-12)$;
\item $\mathrm{gcd}(n,\ell-2)=4$ and $4n|(\ell^2-4\ell-12)$; and
\item $\mathrm{gcd}(n,\ell+2)=4=\mathrm{gcd}(n,\ell-2)$ and $4n|(\ell^2+12)$.
\end{enumerate}
\item The graphs $\mathcal{M}_O(n,1,b)$, $3<b<n-2$, belong to $\mathcal{C}$ if and
only if $\mathrm{gcd}(n,b-1)=2$, $(b-1)^2/2\equiv 2(\mbox{mod }n)$ and the parameters
do not satisfy $n\equiv 0(\mbox{mod }8)$ and $b=(n+6)/2$.
\item The graphs $\mathcal{M}_O(n,a,b)$ satisfying Theorem \ref{vt} and $1<a<b-2<
n-a-2$ belong to $\mathcal{C}$ if and only if the parameter set satisfies none of the following:
\begin{enumerate}
\item $n\equiv 8(\mbox{mod }16)$, $a=4a_0+1<n/4-1$, $a_0$ odd, such that $8(a_0^2+a_0+1)=
0$ in $\mathbb{Z}_n$, and $b=n/2+a+2$;
\item $n\equiv 8(\mbox{mod }16)$, $b=4b_0+1$, $b_0$ odd, such that $8(b_0^2+b_0+1)=
0$ in $\mathbb{Z}_n$, $n/2<b<(3n-4)/4$ and $a=b-n/2+2$;
\item $n\equiv 48(\mbox{mod }96)$ and either $a=n/4+3$ and $b=n/2+1$, or $a=n/4-3$ and
$b=n/2-1$;
\item $8|n$, $b=4b_0+3<(3n+4)/4$, $b_0>0$ even, $4(b_0+1)^2\equiv 4
(\mbox{mod }n)$ and $a=b-n/2-2$; or $b=4b_0+1<(3n-4)/4$, $b_0$ odd, $4b_0^2\equiv 4(\mbox{mod }n)$ and
$a=b-n/2+2$; and
\item $8|n$, $a=4a_0+3<(n+4)/4$, $a_0$ even, $4(a_0+1)^2\equiv 4(\mbox{mod }n)$
and $b=n/2+a-2$; or $a=4a_0+1<(n-4)/4$, $a_0$ odd, $4a_0^2\equiv 4(\mbox{mod }n)$
and $b=n/2+a+2$. 
\end{enumerate} 
\end{itemize}

\begin{thm}\label{main} A connected trivalent vertex-transitive graph $X$ is $\mathcal{F}(1,2)$-
invariant, where the 2-factor is composed of two cycles, if and only if it belongs to $\mathcal{C}$.
\end{thm}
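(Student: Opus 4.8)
The plan is to combine a structural dichotomy on $\mathrm{Aut}(X)$ with a case-by-case analysis driven by the labelling set up in Section~1. Since $X$ is connected, trivalent, and $\mathcal{F}(1,2)$-invariant with the $2$-factor $F$ consisting of two $n$-cycles $C_1,C_2$ joined by a perfect matching $[u_i,v_i]$, every $g\in\mathrm{Aut}(X)$ either fixes $U$ and $V$ setwise or interchanges them; write $A=\mathrm{Aut}(X)$ and let $A^+$ be the index-$\le 2$ subgroup fixing $U$. The faithful-restriction principle from the excerpt says $A^+$ embeds in $\mathrm{Sym}(U)$, and since $A^+$ must preserve $C_1$ (the unique $2$-regular subgraph of $\langle U\rangle$ once $X$ is trivalent and the matching is removed) it is a subgroup of the dihedral group $D_n=\langle\rho,\tau\rangle$ acting on $U$. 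Vertex-transitivity forces $A^+$ to be transitive on $U$ (if $A=A^+$) or at least of index $\le 2$ in a transitive group (if the swap occurs); in either case $\langle\rho\rangle\le A^+$ or $A^+$ contains a transitive cyclic-or-dihedral subgroup. This already reduces the whole problem to: which placements of the missing edges in $\langle V\rangle$ are forced to be $\rho^k$-shifts (giving $\mathrm{GP}(n,k)$), or alternating jump patterns (giving $\mathrm{HTG}$ or $\mathcal{M}_O$)?

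The core of the forward direction (``$X$ is $\mathcal{F}(1,2)$-invariant $\Rightarrow X\in\mathcal{C}$'') is then to classify the $A^+$-orbits on the matching edges and deduce the structure of $\langle V\rangle$. I would split on whether $A^+$ is transitive on $U$ or has two orbits. If transitive, the edge $[v_0,v_j]$ and its $\rho$-images account for all $V$-edges, so $\langle V\rangle$ is a circulant $C_n(k)$ and $X=\mathrm{GP}(n,k)$; the condition $k^2\equiv\pm1\pmod n$ then pops out as exactly the requirement that the ``obvious'' candidate automorphism $u_i\mapsto v_i,\ v_i\mapsto u_{ki}$ (or similar) is well-defined and preserves both factors — this is classical for $\mathrm{GP}(n,k)$ and I'd cite the standard automorphism-group results. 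If $A^+$ has two orbits on $U$ (necessarily the even and odd indices, forced by $\langle\rho^2,\tau\rangle$-type groups plus a swap), then each even vertex $v_i$ sends out $V$-edges and each odd vertex receives them, which is precisely the defining pattern of $\mathrm{HTG}(2,n,\ell)$ (one jump edge per even index, the rest the trivial matching reorganized) or of $\mathcal{M}_O(n,a,b)$ (two edges $[v_i,v_{i+a}],[v_i,v_{i+b}]$ per even index). Determining exactly which parameters survive is a bookkeeping exercise: for each candidate one writes the putative swap automorphism explicitly and checks it fixes both $C_1\cup C_2$ and the matching, and separately one must rule out the ``accidental'' larger automorphism group that would merge the two orbits — the exceptional lists $(4,\pm1),(5,\pm2),(8,\pm3),\dots$ and the $\gcd$/divisibility conditions are exactly the small or coincidental cases where such an extra automorphism exists and destroys invariance. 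Theorem~\ref{vt} (the vertex-transitivity criterion for $\mathcal{M}_O$, referenced but not shown in the excerpt) is invoked here to know which $\mathcal{M}_O(n,a,b)$ are even vertex-transitive before asking about invariance.

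The converse (``$X\in\mathcal{C}\Rightarrow$ $\mathcal{F}(1,2)$-invariant'') is the more mechanical half: for each of the four bullet families I would exhibit the natural $2$-factor (the two $n$-cycles $C_1,C_2$) and matching, then show every generator of $\mathrm{Aut}(X)$ preserves it. For the generalized Petersen graphs this uses Frucht–Graver–Watkins-type descriptions of $\mathrm{Aut}(\mathrm{GP}(n,k))$: outside the seven exceptional families the automorphism group is exactly $\langle\rho\rangle\rtimes(\text{the }\pm1\text{ inversions and the }k\leftrightarrow k^{-1}\text{ swap})$, and each such generator visibly permutes $\{C_1,C_2\}$ and fixes the spokes as a set; in the excluded cases the group is strictly larger (e.g. the Petersen graph itself, $(5,2)$) and one checks no invariant $(1,2)$-factorization into two cycles exists. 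For $\mathrm{HTG}$ and $\mathcal{M}_O$ one uses the translations $\rho^2$, the reflection $\tau$, and the one extra ``flip'' automorphism whose existence is guaranteed precisely by the stated $\gcd$ and congruence conditions — here the congruences like $(b-1)^2/2\equiv2\pmod n$ or $4(a_0+1)^2\equiv4\pmod n$ are the exact solvability conditions for the flip to be an automorphism, and the further exclusions (the $b=(n+6)/2$, $n\equiv 8\pmod{16}$, $n\equiv48\pmod{96}$ cases) are where yet another automorphism appears and breaks invariance, so they must be removed.

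The main obstacle I anticipate is not any single deep idea but the sheer combinatorial volume of the exceptional-case analysis: in both directions one must (i) enumerate all ``extra'' automorphisms that can arise for special parameters, and (ii) verify for each that it either does or does not preserve \emph{some} $(1,2)$-factorization into two $n$-cycles — and the latter is subtle because even when $\mathrm{Aut}(X)$ is large, a \emph{different} choice of $2$-factor (two cycles $C_1',C_2'$ not equal to the canonical $C_1,C_2$) might still be invariant, so one cannot simply check the one obvious factorization. Controlling this requires showing that any invariant $(1,2)$-factorization of $X$ with a two-cycle $2$-factor is, up to $\mathrm{Aut}(X)$, the canonical one — essentially a uniqueness statement for the $2$-factor — and then the exceptional lists are exactly the parameter sets where the (now essentially unique) canonical factorization fails to be $\mathrm{Aut}(X)$-invariant because of an extra symmetry. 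Nailing down this uniqueness and then grinding through the finitely many congruence/gcd subcases is where the real work lies.
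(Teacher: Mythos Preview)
Your high-level dichotomy (restrict the setwise stabilizer of $U$ to $D_n$, then split on whether $\rho$ lies in $H$) matches the paper. But two points need correction, and the second is the real gap.

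First, a technical slip: you split on ``whether $A^+$ is transitive on $U$ or has two orbits,'' and you name $\langle\rho^2,\tau\rangle$ as the relevant subgroup. In fact $A^+$ is \emph{always} transitive on $U$ (vertex-transitivity of $X$ plus invariance of $\{U,V\}$ forces this). The correct dichotomy is whether the transitive subgroup $H\le D_n$ contains the $n$-cycle $\rho$ or not; when $n$ is even and it does not, the only transitive subgroup avoiding $\rho$ is $\langle\rho^2,\rho\tau\rangle$, not $\langle\rho^2,\tau\rangle$ (the latter has two orbits and cannot occur). This matters because the $\mathcal{M}_O$ edge pattern on $V$ is forced by $\rho\tau$-invariance, not by a two-orbit structure.

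Second, and more seriously: you anticipate needing a uniqueness statement for the invariant two-cycle $2$-factor in order to rule out ``a different choice of $2$-factor $C_1',C_2'$ might still be invariant.'' The paper bypasses this entirely with a short observation (Lemma~\ref{reg}): if $X=\mathcal{M}_O(n,a,b)$ with $a\neq n-b$ is $\mathcal{F}(1,2)$-invariant with respect to \emph{any} such factorization, then any $h$ fixing $u_0$ must fix $v_0$ (via the invariant $1$-factor) and hence, if nontrivial, restricts to $\tau$ on $U$; but then $\widehat{\rho\tau}\cdot\widehat{\tau}=\widehat{\rho}\in\mathrm{Aut}(X)$, forcing $X$ to be a generalized Petersen graph, contradiction. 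So the vertex stabilizer is trivial and $\mathrm{Aut}(X)=G$ has order exactly $2n$. This gives the clean equivalence: $\mathcal{M}_O(n,a,b)$ is $\mathcal{F}(1,2)$-invariant if and only if the stabilizer of $u_0$ in $\mathrm{Aut}(X)$ is trivial. All of Sections~5--8 then reduce to hunting for nontrivial stabilizer elements, organized (Lemma~\ref{orb}) by which of the three $G$-edge-orbits $\mathcal{R},\mathcal{B},\mathcal{G}$ get fused in $\mathrm{Aut}(X)$. Your proposed route through $2$-factor uniqueness would work in principle but is strictly harder and unnecessary; without the stabilizer reduction you have no clear mechanism for generating the specific congruence exclusions in the fourth bullet of $\mathcal{C}$.
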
 

{\bf Outline of the proof of Theorem \ref{main}}.  Because of the length of the proof of the
preceding theorem---that is, the rest of the paper---we now give an outline of the proof.  The
graphs that arise in the proof are a result of the setwise stabilizer of $U$ being a transitive
subgroup of the dihedral group.  The generalized Petersen graphs occur when $\rho$ is in
the transitive subgroup.  When $n$ is even, there is a transitive subgroup of $D_n$ not
containing $\rho$ and this transitive subgroup produces graphs of the form $\mathcal{M}_O(n,a,b)$.

Generalized Petersen graphs are covered in Section 3.  Graphs of the form $\mathcal{M}_O(n,a,b)$
fill the remainder of the paper.  The basic strategy for the latter graphs is to eliminate those which
are not $\mathcal{F}(1,2)$-invariant.  There are three situations for which the elimination is slightly
convoluted and we shall point them out when they occur.  In Section 4 we determine conditions
these graphs must satisfy in
order to be $\mathcal{F}(1,2)$-invariant.  The potential candidates partition naturally into
three parts.  The candidates for which $b-a=2$ are honeycomb toroidal graphs and they are
treated in Section 5.  Section 6 deals with the special case of $a=1$ for which there are useful
blocks of imprimitivity.  

The remaining graphs of the form $\mathcal{M}_O(n,a,b)$ are discussed in Sections 7 and 8
where the focus is on the various types of vertex stabilizers which may occur.  Section 7 considers
those which are arc-transitive and Section 8 examines those which are not arc-transitive.
There are many cases that arise in Sections 7 and 8.  We give details for some of the cases
leaving cases for which the details are just analogues to the reader.

\section{Generalized Petersen Graphs}

The rest of the paper is devoted to the proof of Theorem \ref{main}.  We start by looking at
how generalized Petersen graphs become involved. 

\begin{lemma}\label{dh}  Let $X$ be a connected trivalent vertex-transitive $\mathcal{F}(1,2)$-invari\\ant graph
of order $2n$, where the 2-factor $F$ consists of two $n$-cycles $C_1$ and $C_2$.  If $H$ is the
restriction of the setwise stabilizer $\mathrm{Aut}(X)_{\{U\}}$ to $U$, 
then $H\leq D_n$.
\end{lemma}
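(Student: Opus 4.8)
The plan is to use trivalence to pin down the subgraph $X$ induces on $U$, and then invoke the structure of the automorphism group of a cycle. First I would observe that each vertex $u_i$ has valency $3$ in $X$ and already has the three neighbours $u_{i-1}$, $u_{i+1}$ (from the cycle $C_1$) and $v_i$ (from the $1$-factor). Hence $u_i$ has exactly two neighbours inside $U$, namely $u_{i-1}$ and $u_{i+1}$, so the induced subgraph $X[U]$ carries no chords and is precisely the $n$-cycle $C_1$. (Here $n \geq 3$ since $C_1$ is a cycle.)

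Next, take any $f \in \mathrm{Aut}(X)_{\{U\}}$. Since $f$ fixes $U$ setwise it permutes the vertices of $U$, and since $f$ is a graph automorphism of $X$ it sends edges of $X[U]$ to edges of $X[U]$. Therefore the restriction $f|_U$ is an automorphism of the cycle $C_1$. Since $H$ is by definition the group of all such restrictions, this shows $H \leq \mathrm{Aut}(C_1)$.

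Finally I would invoke the classical fact that the automorphism group of an $n$-cycle with $n \geq 3$ is the dihedral group of order $2n$. Under the labelling $u_0, u_1, \ldots, u_{n-1}$ chosen so that $[u_i, u_{i+1}]$ are the edges of $C_1$, this automorphism group is exactly $\langle \rho, \tau \rangle = D_n$ in the notation fixed in Section 1, so $H \leq D_n$, as required.

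There is really no serious obstacle here: the whole content is the opening remark that trivalence forbids $C_1$ from acquiring chords, which forces $X[U]$ to be a single cycle rather than some larger graph on $U$; once that is noted, the rest is the standard description of $\mathrm{Aut}(C_n)$.
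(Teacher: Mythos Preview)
Your proof is correct and follows essentially the same route as the paper: both arguments reduce to observing that any element of $\mathrm{Aut}(X)_{\{U\}}$ restricts to an automorphism of the $n$-cycle $C_1$, and then invoke $\mathrm{Aut}(C_n)=D_n$. The only minor difference is that you justify this via trivalence (showing $X[U]=C_1$), whereas the paper implicitly relies on the $\mathcal{F}(1,2)$-invariance to ensure the $2$-factor, hence $C_1$, is preserved; your version is slightly more explicit and in fact uses less of the hypotheses.
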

\begin{proof} Because $X$ is $\mathcal{F}(1,2)$-invariant, we know that $H$ must act transitively on
$U$.  The automorphism group of a cycle of length $n$ is the dihedral
group $D_n$ from which the result follows.      \end{proof}.

\begin{thm}\label{gp} Let $X$ be a connected trivalent vertex-transitive $\mathcal{F}(1,2)$-invariant graph
of order $2n$, where the 2-factor $F$ consists of two $n$-cycles $C_1$ and $C_2$.  Let $H$ be the
restriction to $U$ of the setwise stabilizer $\mathrm{Aut}(X)_{\{U\}}$.  If $H$ contains an
$n$-cycle, then $X$ is a generalized Petersen graph.
\end{thm}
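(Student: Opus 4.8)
The plan is to pin down the structure forced on the $V$-edges by the hypothesis that $H$ contains an $n$-cycle, and then recognize the result as a generalized Petersen graph. First I would invoke Lemma~\ref{dh} to note $H \le D_n$, so $H$ contains the full cyclic group $\langle\rho\rangle$ generated by the $n$-cycle $\rho = (u_0\;u_1\;\cdots\;u_{n-1})$. By the faithful-extension remarks preceding the section, each element of $H$ extends uniquely to an automorphism of $X$ fixing $U$ setwise; in particular $\rho$ extends to an automorphism $\hat\rho$ with $\hat\rho(u_i) = u_{i+1}$ and hence $\hat\rho(v_i) = v_{i+1}$ for all $i$ (subscripts mod $n$). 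So $\hat\rho$ is an automorphism of $X$ acting as an $n$-cycle on each of $U$ and $V$.

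The heart of the argument is then to use $\hat\rho$-invariance of the edge set together with trivalence to describe the edges within $V$. Every vertex $v_i$ has exactly one neighbor in $U$, namely $u_i$, so it has exactly two neighbors in $V$ (the $v$-subgraph induced on $V$ is $2$-regular, consistent with $C_2$ being an $n$-cycle). Suppose $[v_0, v_k]$ is one of the edges at $v_0$ in $V$. Applying the powers $\hat\rho^j$, the edge $[v_j, v_{j+k}]$ must also be present for every $j$. These $n$ edges already give each vertex of $V$ valency $2$ within $V$ (the edge at $v_j$ coming from $j$ itself and the one coming from $j-k$), so they constitute \emph{all} the $V$-edges; there is no room for a second orbit. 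Hence the edges inside $V$ are exactly $\{[v_i, v_{i+k}] : 0 \le i \le n-1\}$ for a single fixed $k$, which is precisely the defining edge set of $\mathrm{GP}(n,k)$. Here one must check the nondegeneracy conditions implicit in calling $X$ a generalized Petersen graph: $C_2$ being a single $n$-cycle forces $\gcd(n,k)=1$, and $X$ having no multiple edges forces $k \not\equiv 0$; connectivity and trivalence are already assumed. Matching vertex labels then identifies $X$ with $\mathrm{GP}(n,k)$ on the nose.

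The main obstacle, and the point deserving the most care, is the claim that the single $\hat\rho$-orbit of one $V$-edge exhausts all $V$-edges — i.e.\ ruling out the possibility that $v_0$ is joined in $V$ to two vertices lying in different $\hat\rho$-orbits of edges. This is handled by the valency count above: two distinct orbits would each contribute valency at least $1$ at every vertex of $V$ in a $\langle\rho\rangle$-equivariant way, but one checks an orbit of the edge $[v_0,v_k]$ already contributes valency exactly $2$ everywhere (valency $1$ from the ``outgoing'' copy and $1$ from the ``incoming'' copy, and these coincide only in the degenerate case $2k \equiv 0$, $n$ even, which would make $v_0$ adjacent only to $v_k$ — still valency $2$ in $V$ only if there is a second orbit, contradicting trivalence). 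One should also separately dispatch the small-$n$ degeneracies (e.g.\ $n=3,4$) where a would-be $V$-edge might coincide with the $C_2$-structure or create a multi-edge; in those cases $X$ is either already of the required form or the hypotheses are vacuous. Once these checks are in place the identification with a generalized Petersen graph is immediate.
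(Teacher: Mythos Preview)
Your argument is correct and follows essentially the same route as the paper: lift the $n$-cycle $\rho$ to its faithful extension $\hat\rho$, observe that the $\hat\rho$-invariant $2$-regular subgraph on $V$ is a circulant and hence consists of the edges $[v_i,v_{i+k}]$ for a single $k$, and note $\gcd(n,k)=1$ since $C_2$ is an $n$-cycle. The paper compresses your orbit/valency count into the single phrase ``the subgraph induced on $V$ is a circulant graph of valency~2,'' but the content is identical.
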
 
\begin{proof} If $H$ contains an $n$-cycle, then $\rho=(u_0\;u_1\;\cdots\;u_{n-1})\in H$.
Then the faithful extension of $\rho$ is a product of two $n$-cycles.
This implies the subgraph induced on $V$ is a circulant graph of valency 2, that is, there is a
$k$ such that the edges have the form $[v_i,v_{i+k}]$.  This is the generalized Petersen graph
$\mathrm{GP}(n,k)$.  Note that because this induced subgraph is an $n$-cycle,
we have $\mathrm{gcd}(n,k)=1$.  
\end{proof}

\bigskip

The automorphisms of generalized Petersen graphs were determined in \cite{F1}.  Thus, we know
precisely which generalized Petersen graphs are $\mathcal{F}(1,2)$-invariant.  They are precisely
those listed in the first item describing the collection $\mathcal{C}$.

\section{Another Potential Family}

One way of looking at generalized Petersen graphs is that they arise naturally by considering the
action of the faithful extension of $D_n$ acting on two sets where the desired outcome is a
connected trivalent graph composed of two 2-regular graphs, one of which is a cycle, joined by
a 1-factor.  So it was obvious to have a permutation which is a product of two $n$-cycles acting
as an automorphism on the graph, that is, the restriction to one set is an $n$-cycle.

However, when $n$ is even, there is a transitive subgroup of $D_n$ that does not contain an
$n$-cycle \cite{C2}.  Namely, it is the subgroup $\langle\rho^2,\rho\tau\rangle$.
We now describe a family $\mathcal{M}$ of trivalent graphs which admit the faithful extension of
the group $\langle\rho^2,\rho\tau\rangle$ as automorphisms and have the edges in $C_1$ and the 1-factor
edges as defined earlier.  The edges on the vertices $V$ require definition.  We want the
graph defined by these edges to be a 2-factor.

There are two cases.  Suppose there is an edge from $v_i$ to $v_{i+k}$, where both $i$ and $k$
are even.  The action of the faithful extension of $\rho^2$ generates a 2-factor on the vertices 
$\{v_i:i\mbox{ even}\}$.
A 2-factor on the vertices with odd subscripts requires an even ``jump'' as well.  The edges are
then described with two parameters $a,b$, both even, so that there are the edges $[v_i,v_{i+a}]$
when $i$ is even, and the edges $[v_i,v_{i+b}]$ when $i$ is odd.  The notation for this graph
is $\mathcal{M}_E(n,a,b)$. 

We have seen that if one of the parameters describing the edges of the subgraph induced on $V$
is even, then both must be even.
This leaves the case that both parameters are odd.  In this case the graph $\mathcal{M}_O(n,a,b)$
has the edges $[v_i,v_{i+a}]\mbox{ and }[v_i,v_{i+b}]$ for all even $i$.  These graphs form
a subfamily of the $GDGP$ graphs defined in \cite{J1}; in particular, $\mathcal{M}_O(n,a,b)$
is the same as $GDGP_2(n,a,n-b)$.

Note that $\mathcal{M}_E(n,a,b)$ is a generalized Petersen graph if $a=b$.  Similarly,
$\mathcal{M}_O(n,a,b)$ is a generalized Petersen graph when $a=n-b$.  Also
note that the subgraph induced on $V$ is never connected for any $\mathcal{M}_E(n,a,b)$.
The situation for graphs with both parameters odd is captured in the following result.

\begin{lemma}\label{span} The subgraph induced on $V$ in $\mathcal{M}_O(n,a,b)$ is a
spanning cycle if and only if $\mathrm{gcd}(b-a,n)=2$.
\end{lemma}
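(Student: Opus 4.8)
The plan is to analyze the structure of the subgraph induced on $V$ directly from the edge set. Recall that in $\mathcal{M}_O(n,a,b)$ the edges among the $v_i$ are precisely $[v_i,v_{i+a}]$ and $[v_i,v_{i+b}]$ for every even $i$, with $n$ even and $a,b$ both odd. First I would observe that since $a$ and $b$ are odd, each such edge joins an even-indexed vertex to an odd-indexed vertex; hence the induced subgraph on $V$ is bipartite with parts the even-indexed and odd-indexed vertices, and every vertex has valency exactly $2$ (each even $v_i$ is incident to $[v_i,v_{i+a}]$ and $[v_i,v_{i+b}]$; each odd $v_j$ is incident to $[v_{j-a},v_j]$ and $[v_{j-b},v_j]$). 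So the induced subgraph is always a disjoint union of cycles, and it is a spanning cycle precisely when it is connected.

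Next I would set up a concrete way to track connectivity. Starting at an even vertex $v_0$ and following edges alternately, one goes $v_0 \to v_a$ (via the $a$-edge) then $v_a \to v_{a-b}$ (back via a $b$-edge, since $a-b$ is even), and so on; the net effect of traversing two consecutive edges is a shift by $\pm(b-a)$ on the even-indexed vertices. More precisely, I would argue that the component of $v_0$ consists of all $v_i$ with $i \equiv 0$ or $i \equiv a \pmod{\gcd(b-a,n)}$, by showing the set of even indices reachable from $0$ is exactly the subgroup of $\mathbb{Z}_n$ generated by $b-a$ (intersected with the even residues, which it automatically lies in since $b-a$ is even), together with their $a$-translates reached by one more step. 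Let $d=\gcd(b-a,n)$. Since $b-a$ is even, $d$ is even, so the subgroup $\langle b-a\rangle$ in $\mathbb{Z}_n$ has order $n/d$ and consists of multiples of $d$; these are all even. Adding the $a$-translates (one odd-step away) gives a component of size $2n/d$. This equals $2n$, i.e. the subgraph is a single spanning cycle, exactly when $d=2$, giving the claimed equivalence $\gcd(b-a,n)=2$. The converse direction (if the subgraph is a spanning cycle then $\gcd(b-a,n)=2$) follows from the same count: if $d>2$ the component of $v_0$ has fewer than $2n$ vertices.

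The main obstacle I anticipate is being careful about the case $d=1$ versus $d=2$: since $n$ is even and $b-a$ is even, $\gcd(b-a,n)$ is always even, so $d=1$ never occurs and $d=2$ is genuinely the minimal case; I should state this explicitly to avoid a spurious gap. A secondary subtlety is verifying that each component really has exactly $2n/d$ vertices rather than merely at most that many — that is, that the alternating walk does not close up prematurely. This amounts to checking that within the component, each even index is reached from $0$ by a unique additive combination modulo the relation, which is just the standard fact that the Cayley-graph-like structure on $\langle b-a \rangle \le \mathbb{Z}_n$ is a single cycle of length $n/d$; combining with the bipartite doubling gives $2n/d$. Once these counting points are pinned down, the equivalence is immediate.
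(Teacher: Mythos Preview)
Your approach is correct and is essentially the same as the paper's: both arguments trace the $2$-factor, observe that consecutive same-parity indices differ by $\pm(b-a)$, and reduce to the condition $\gcd(b-a,n)=2$ (the paper phrases it as $\gcd((b-a)/2,n/2)=1$, which is equivalent). One small slip to fix: the induced subgraph on $V$ has $n$ vertices, not $2n$, so where you write ``This equals $2n$'' you mean ``This equals $n$''; the conclusion $d=2$ that you draw is nonetheless the right one.
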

\begin{proof}Without loss of generality let $a<b$.  Start tracing the 2-factor at $v_a$ with the
edge $[v_a,v_0]$.  It continues with the 2-path $[v_a,v_0,v_b]$.  The 2-factor alternates 
odd and even subscripted vertices and covers a jump of length $b-a$ in going from one odd
subscripted vertex to the next.  This means it is jumping $(b-a)/2$ successive odd subscripted
vertices.  As there are $n/2$ odd subscripted vertices, the 2-factor passes through all of them
if and only if $\mathrm{gcd}((b-a)/2,n/2)=1$.   This is equivalent to $\mathrm{gcd}(b-a,n)=2$.
\end{proof}

\begin{lemma}\label{reg}If $X=\mathcal{M}_O(n,a,b)$ is $\mathcal{F}(1,2)$-invariant and
$n-b\neq a$, then $|\mathrm{Aut}(X)|=2n$ which means $\mathrm{Aut}(X)$ is a regular group.
\end{lemma}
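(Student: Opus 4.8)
The plan is to bound $|\mathrm{Aut}(X)|$ from above by $2n$; since an $\mathcal{F}(1,2)$-invariant graph is vertex-transitive and $X$ has order $2n$, this forces $|\mathrm{Aut}(X)|=2n$ and hence a trivial vertex stabiliser, i.e. regularity. The two ingredients are the faithful-restriction set-up from the introduction and the arithmetic of transitive subgroups of $D_n$.

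First I would record two structural facts. Because $X$ is $\mathcal{F}(1,2)$-invariant with invariant 2-factor $F=C_1\cup C_2$, every automorphism of $X$ permutes the two cycles of $F$, hence permutes their vertex sets, so it maps $\{U,V\}$ to itself; consequently the setwise stabiliser $\mathrm{Aut}(X)_{\{U\}}$ has index at most $2$ in $\mathrm{Aut}(X)$. Secondly, by the faithful-extension observation from the introduction an automorphism fixing $U$ setwise is determined by its restriction to $U$, so the restriction map identifies $\mathrm{Aut}(X)_{\{U\}}$ with the group $H\leq D_n$ of Lemma \ref{dh}; in particular $|\mathrm{Aut}(X)_{\{U\}}|=|H|$.

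Next I would pin down $|H|$. Vertex-transitivity of $X$ forces $H$ to act transitively on the $n$-element set $U$, so $n\mid |H|$; since $H\leq D_n$ this leaves only $|H|\in\{n,2n\}$. I claim $|H|\neq 2n$, equivalently $\rho\notin H$: if $\rho\in H$ then by Theorem \ref{gp} the subgraph of $X$ induced on $V$ is a $2$-valent circulant, i.e. $v_0$ has $V$-neighbours $v_k$ and $v_{-k}$ for a single $k$; but in $\mathcal{M}_O(n,a,b)$ the $V$-neighbours of $v_0$ are exactly $v_a$ and $v_b$, so $\{a,b\}=\{k,-k\}$ and thus $a+b\equiv 0\ (\mathrm{mod}\ n)$, contradicting $n-b\neq a$. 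Hence $|H|=n$, so $|\mathrm{Aut}(X)_{\{U\}}|=n$ and $|\mathrm{Aut}(X)|\leq 2n$. Combining this with $2n=|V(X)|$ dividing $|\mathrm{Aut}(X)|$ (vertex-transitivity) gives $|\mathrm{Aut}(X)|=2n$, and then the vertex stabiliser is trivial, so $\mathrm{Aut}(X)$ is a regular group.

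The one point that needs care is the claim $\rho\notin H$, which is precisely the statement that, when $n-b\neq a$, $\mathcal{M}_O(n,a,b)$ is not a generalized Petersen graph with respect to this $1$-factor. If invoking Theorem \ref{gp} feels too slick, I would argue directly: the faithful extension $\hat\rho$ of $\rho$ sends the $V$-edge $[v_0,v_a]$ to an edge at $v_1$, whose only $V$-neighbours are $v_{1-a}$ and $v_{1-b}$, forcing $2a\equiv 0$ or $a+b\equiv 0\ (\mathrm{mod}\ n)$; applying $\hat\rho$ to $[v_0,v_b]$ similarly forces $a+b\equiv 0$ or $2b\equiv 0$; since $a\neq b$ cannot both equal $n/2$, we must have $a+b\equiv 0$, i.e. $n-b=a$. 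Everything else in the argument is orbit–stabiliser bookkeeping and the standing conventions of the paper, so no further obstacle is expected.
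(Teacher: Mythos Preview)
Your proof is correct. The paper's argument is organised a little differently: rather than bounding $|\mathrm{Aut}(X)|$ via the setwise stabiliser of $U$, it examines the vertex stabiliser of $u_0$ directly, observes that any non-identity element of it must be the faithful extension of $\tau$, and then (using that the faithful extension of $\rho\tau$ lies in $\mathrm{Aut}(X)$ by the very construction of $\mathcal{M}_O(n,a,b)$) concludes that the faithful extension of $\rho$ would be an automorphism as well, giving the same contradiction with $n-b\neq a$ that you reach. So both proofs pivot on the single fact ``$\rho\in H$ forces $a+b\equiv 0\pmod n$''; you reach it by an order/index count inside $D_n$, the paper by combining $\tau$ with the built-in automorphism $\rho\tau$. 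One minor wording point: your ``$|H|\neq 2n$, equivalently $\rho\notin H$'' is not literally an equivalence, since $H=\langle\rho\rangle$ has order $n$; but as you actually establish the stronger statement $\rho\notin H$, the conclusion $|H|=n$ follows and the argument is unaffected.
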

\begin{proof} Consider an automorphism $h$ that fixes $u_0$.  Because $h$ must preserve the
1-factor, it also fixes $v_0$.  Thus, if $h$ is not the identity, it must interchange $u_1$ and $u_{n-1}$
which implies that $h$ is the faithful extension of $\tau$.  This in turn implies that the
faithful extension of $\rho\in\mathrm{Aut}(X)$ which
contradicts the hypothesis that $n-b\neq a$.     \end{proof}

\medskip

\begin{thm}\label{vt} Let $X=\mathcal{M}_O(n,a,b)$ satisfy $\mathrm{gcd}(b-a,n)=2$
and $a\neq n-b$.  The automorphism group of $X$ contains a regular subgroup $G$ which
preserves the partition of $E(X)$ into the two $n$-cycles on $U$ and $V$, respectively, and
the perfect matching joining them if and only if $(b-a)^2/2\equiv 2(\mbox{\rm mod }n)$
and one of $a+(a-1)(a-b)/2\equiv 1(\mbox{\rm mod }n)$ or
$b+(b-1)(b-a)/2\equiv 1(\mbox{\rm mod }n)$ holds.
\end{thm}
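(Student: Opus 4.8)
The plan is to exploit the structure of the transitive subgroup $\langle \rho^2, \rho\tau \rangle$ of $D_n$ and determine precisely when its faithful extension (or an appropriate modification of it) acts on $V$ in a way compatible with the edge set of $\mathcal{M}_O(n,a,b)$. First I would observe that since $X = \mathcal{M}_O(n,a,b)$ is constructed to admit the faithful extension of $\langle \rho^2, \rho\tau \rangle$ as automorphisms, a regular subgroup $G$ with the stated properties must restrict on $U$ to a regular (hence transitive) subgroup of $D_n$; by Theorem~\ref{gp} and Lemma~\ref{reg}, if this subgroup contained $\rho$ then $X$ would be a generalized Petersen graph, forcing $a = n-b$, which is excluded. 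Hence the restriction of $G$ to $U$ must be exactly $\langle \rho^2, \rho\tau \rangle$, the unique transitive proper subgroup of $D_n$ not containing $\rho$. So $|G| = 2n$ and $G$ is generated by the faithful extensions of $\rho^2$ and some element whose restriction to $U$ is $\rho\tau$.

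The heart of the argument is then to pin down the faithful extension of $\rho\tau$ and check the edge-preservation condition on $V$. The permutation $\rho\tau$ fixes the parity-of-subscript partition of $U$ only after conjugation, so I would compute directly: $\rho\tau$ sends $u_i \mapsto u_{1-i}$, which swaps the set of even-subscripted vertices with the set of odd-subscripted vertices. Its faithful extension $\sigma$ sends $v_i \mapsto v_{1-i}$. The subgraph on $V$ has edges $[v_i, v_{i+a}]$ and $[v_i, v_{i+b}]$ for even $i$; equivalently, using $a, b$ odd, each even-subscripted $v_i$ has two odd-subscripted neighbours, and I should rewrite the edge set symmetrically in terms of both endpoints. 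Applying $\sigma$ to the edge $[v_i, v_{i+a}]$ (with $i$ even, so $i+a$ odd) gives $[v_{1-i}, v_{1-i-a}]$, where now $1-i$ is odd and $1-i-a$ is even; for this to be an edge we need it to be one of the two edges emanating from the even vertex $v_{1-i-a}$, i.e. $1-i = (1-i-a) + a$ (trivially true) — the real constraint comes from iterating $\sigma$ composed with powers of the $\rho^2$-extension, or more cleanly from the requirement that $G$ be a \emph{group}, which forces a relation between the two jump lengths. Tracking how the spanning cycle on $V$ (guaranteed by Lemma~\ref{span}, since $\gcd(b-a,n)=2$) is traversed, and demanding that $\sigma$ map this cycle to itself as a graph automorphism, should produce the two numerical conditions: $(b-a)^2/2 \equiv 2 \pmod n$ is the condition that $\sigma^2$ (a power of the $\rho^2$-extension) acts correctly on the cycle structure, while the alternative $a + (a-1)(a-b)/2 \equiv 1$ or $b + (b-1)(b-a)/2 \equiv 1 \pmod n$ records which of the two possible ``anchorings'' of $\sigma$ on $V$ (fixing the role of the $a$-jump versus the $b$-jump at the base vertex) is consistent.

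For the converse, I would assume the two congruences hold and explicitly construct $G$: take $G = \langle \alpha, \sigma \rangle$ where $\alpha$ is the faithful extension of $\rho^2$ and $\sigma$ is the faithful extension of $\rho\tau$ with its action on $V$ defined to match whichever anchoring condition is satisfied, then verify directly that $\alpha$ and $\sigma$ preserve all three parts of the partition (the $U$-cycle, the $V$-cycle, the matching) — the $U$-cycle and matching parts are immediate from the faithfulness setup, and the $V$-cycle part is exactly where the congruences are used. Checking $\sigma^2 = \alpha^{\pm k}$ for the appropriate $k$ and that the group has order $2n$ and acts regularly (no nonidentity element fixes $u_0$, by the same argument as in Lemma~\ref{reg}) completes it.

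The main obstacle I anticipate is the bookkeeping in the forward direction: translating ``$\sigma$ maps the $V$-cycle to itself'' into the clean closed-form congruences, because one must carefully parametrize the spanning cycle on $V$ (its vertices, in cyclic order, form an arithmetic-like progression with step governed by $(b-a)/2$ modulo $n/2$ on the odd part) and then compute the image of a generic vertex under $\sigma$ in those cycle-coordinates. The appearance of two alternative conditions rather than one, and the subtraction of the excluded generalized-Petersen case $a = n-b$, suggest there are a couple of degenerate sub-cases to handle separately; I would isolate those early. The $\gcd(b-a,n)=2$ hypothesis is essential throughout (it is what makes the $V$-subgraph a single cycle via Lemma~\ref{span}), and I would flag exactly where it is invoked so the statement's hypotheses are seen to be used tightly.
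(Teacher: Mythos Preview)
Your proposal has a structural gap: the group you build never leaves $U$. The faithful extensions of $\rho^2$ and $\rho\tau$ both map $U$ to $U$ and $V$ to $V$, so $\langle\alpha,\sigma\rangle$ has order $n$ with two orbits $U$ and $V$; it cannot be the regular group $G$ of order $2n$. Indeed, the graphs $\mathcal{M}_O(n,a,b)$ are \emph{defined} so that the faithful extension of $\langle\rho^2,\rho\tau\rangle$ acts as automorphisms, which is why your check that $\sigma$ preserves the edge $[v_i,v_{i+a}]$ came out ``trivially true''. No constraint on $a,b$ can arise from those elements. Related symptoms: your $\sigma$ is an involution, so $\sigma^2=1$ and the claim that ``$\sigma^2$ (a power of the $\rho^2$-extension) acts correctly'' carries no content; and there is no freedom in ``anchoring'' the faithful extension of $\rho\tau$ on $V$, since the matching forces $v_i\mapsto v_{1-i}$ uniquely.

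What the paper does instead is introduce the missing ingredient: an element $f\in G$ interchanging $U$ and $V$. Regularity forces $f(u_0)=v_0$, $f(v_0)=u_0$, and $f^2=1$, so $f$ is a product of transpositions swapping the two sides. The only choice is whether $f(u_1)=v_a$ or $f(u_1)=v_b$, and this dichotomy is the true source of the two alternative congruences. Once that choice is made, $f$ is determined on all of $U$ by following $C_1$ onto $C_2$: one gets $f(u_i)=v_{i(a-b)/2}$ for $i$ even and $f(u_i)=v_{a+(i-1)(a-b)/2}$ for $i$ odd (in the first case). Requiring $f$ to preserve the 1-factor at $u_{a-b}$ and at $u_a$ then yields exactly $(a-b)^2/2\equiv 2$ and $a+(a-1)(a-b)/2\equiv 1\pmod n$. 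For the converse one defines $f$ by these formulas and checks directly that the matching is preserved. Your outline can be repaired by replacing your $\sigma$ with this $U\leftrightarrow V$ involution $f$; the index-$2$ subgroup $\langle\alpha,\text{faithful extension of }\rho\tau\rangle$ is then the setwise stabilizer of $U$ inside $G=\langle\alpha,\text{that extension},f\rangle$.
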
 
\begin{proof} By Lemma \ref{span} we know that the subgraph induced on $V$ is an $n$-cycle.
Label the vertices of the cycles as before.  Assume that $\mathrm{Aut}(X)$ contains a regular
subgroup $G$ preserving the partition of $E(X)$ as stated.  Let $H$ be the subgroup of $G$
that maps $U$ to $U$ (and thus $V$ to $V$).  The faithful extension of $\rho$ does
not belong to $H$ because $a\neq n-b$.  Therefore, $H$ is generated by the faithful extensions
of $\rho^2$ and $\rho\tau$.

Let $f\in G$ be the automorphism that maps $u_0$ to $v_0$.  It follows that $f(v_0)=u_0$
because $f$ preserves the 1-factor.  Then $f$ is an involution because $f^2$ fixes $u_0$
and $G$ is regular.  We conclude that $f$ is a product of transpositions because it must 
interchange $U$ and $V$. 

The value of $f(u_1)$ completely determines $f$ because it maps $C_1$ onto $C_2$ and is
a product of transpositions.  There are two possibilities:  Either $f(u_1)=v_a$ or $f(u_1)=v_b$.
We first examine the case that $f(u_1)=v_a$.

We have that $f(u_2)=v_{a-b}$ because $f(u_1)=v_a$.  This implies that the transposition
$(u_2\;\;v_{a-b})$ is in $f$.  Continuing in this way, we see that $f(u_i)=v_{i(a-b)/2}$ for all
even $i$.  Thus, $f(u_{a-b})=v_{(a-b)^2/2}$.  Because $f$ must preserve the 1-factor and the
transposition $(u_2\;\;v_{a-b})$ is in $f$, $(a-b)^2/2\equiv 2(\mbox{mod }n)$ must hold.

Because $f(u_1)=v_a$, using an argument analogous to that in the preceding paragraph, it
is easy to see that $f(u_i)=v_{a+(i-1)(a-b)/2}$ for $i$ odd.  So $f(u_a)=v_{a+(a-1)(a-b)/2}$
which implies $a+(a-1)(a-b)/2\equiv 1(\mbox{mod }n)$ because the 1-factor is preserved. 
This establishes the two conditions of the conclusion.

Now suppose the two conditions hold.  We know that the faithful extensions of $\rho^2$ and
$\rho\tau$ are automorphisms of $X$ and the group they generate has $U$ and $V$
as the two orbits.  Define $f$ by $f(u_i)=v_{i(a-b)/2}$ for $i$ even, $f(u_i)=v_{a+(i-1)(a-b)/2}$
for $i$ odd, and $f$ is a product of transpositions with no fixed points.  It's easy to see that
$f$ interchanges $C_1$ and $C_2$ preserving the edges of both.  What we need to verify is that
$f$ preserves the 1-factor joining the two cycles.

By definition $(u_0\;v_0)$ is a transposition in $f$ preserving the edge $[u_0,v_0]$ of the
1-factor.  When $i$ is even, $f(u_i)=v_{i(a-b)/2}$.  Then $$f(u_{i(a-b)/2})=v_{i(a-b)^2/4}=
v_i$$ because $i(a-b)^2/4=\frac{i}{2}(a-b)^2/2\equiv i(\mbox{mod }n)$.  From this we see
that $f$ preserves the edges of the 1-factor whose end vertices have even subscripts.

When $i$ is odd, $f(u_i)=v_{a+(i-1)(a-b)/2}$.  Then $$f(u_{a+(i-1)(a-b)/2})=v_{a+
(a-1)(a-b)/2+(i-1)(a-b)^2/4}=v_i$$ because $a+(a-1)(a-b)/2\equiv 1(\mbox{mod }n)$
and $(i-1)(a-b)^2/4\equiv i-1(\mbox{mod }n)$.  Hence, $f$ preserves the edges of the 1-factor
whose end vertices have odd subscripts.  

Therefore, the group $G$ generated by $f$ and the faithful extensions of $\rho^2$
and $\rho\tau$ is a regular subgroup of $\mathrm{Aut}(X)$ preserving the edge
partition of $X$ as claimed.

In the case that $f(u_1)=v_b$, then the analogue of the preceding argument is valid and
we end up with the conguences $(b-a)^2/2\equiv 2(\mbox{mod }n)$ and
$b+(b-1)(b-a)/2\equiv 1(\mbox{mod }n)$ being forced to hold.  If we now define a function
$g$ by $g(u_i)=v_{i(b-a)/2}$ for $i$ even, $g(u_i)=v_{b+(i-1)(b-a)/2}$ for $i$ odd and $g$
is a product of disjoint transpositions, then the group $G$ generated by $g$ and the faithful
extensions of $\rho^2$ and $\rho\tau$
is the subgroup of $\mathrm{Aut}(X)$ for which we are looking.    \end{proof}

\medskip

It should be noted that not both conditions at the end of the statement of Theorem \ref{vt}
may hold.  It can be shown that if both conditions hold, then $a=n-b$ which violates one of
the hypotheses.

Theorem \ref{vt} provides us with another family of graphs which may contain $\mathcal{F}(1,2)$-invariant
graphs.  We now explore this family and shall discover several subcases that arise
because of the congruence conditions in Theorem \ref{vt}

\section{Honeycomb Toroidal Graphs} 

The next result tells us that some of the graphs arising in the family from Section 4 are, in
fact, honeycomb toroidal graphs.  This subfamily is treated separately in this section.
   
\begin{thm}\label{htg} The graph $\mathcal{M}_O(n,a,a+2)$ is isomorphic to $\mathrm{HTG}
(2,n,a+1)$.
\end{thm}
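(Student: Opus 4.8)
The plan is to exhibit an explicit bijection between the vertex sets of $\mathcal{M}_O(n,a,a+2)$ and $\mathrm{HTG}(2,n,a+1)$ that carries edges to edges, and to verify this on each of the three edge types. Recall that in $\mathcal{M}_O(n,a,a+2)$ the cycle $C_1$ has edges $[u_i,u_{i+1}]$, the $1$-factor consists of the edges $[u_i,v_i]$, and for every even $i$ there are the two ``spoke-on-$V$'' edges $[v_i,v_{i+a}]$ and $[v_i,v_{i+a+2}]$; by Lemma~\ref{span} (using $\gcd(2,n)=2$, automatic since $n$ is even) the subgraph induced on $V$ is an $n$-cycle, and $a+1$ is even since $a$ is odd, so the parameters $(2,n,a+1)$ are admissible for $\mathrm{HTG}$. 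On the honeycomb side the two $n$-cycles are the $u$-cycle and the $v$-cycle, the cross edges are $[u_i,v_i]$ for odd $i$ and the jump edges $[u_{i+\ell},v_i]$ for even $i$, with $\ell=a+1$.

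First I would fix the map on $C_1$: leave the $u$-vertices essentially alone, say $u_i\mapsto u_i$ for all $i$, so the cycle $C_1$ of $\mathcal{M}_O$ maps onto the $u$-cycle of $\mathrm{HTG}$ verbatim. The real content is to find the right relabelling of the $v$-vertices so that (i) the induced $n$-cycle on $V$ in $\mathcal{M}_O$ becomes the $v$-cycle $[v_j,v_{j+1}]$ of $\mathrm{HTG}$, and (ii) the $1$-factor $[u_i,v_i]$ of $\mathcal{M}_O$ becomes the mixed set of straight cross edges (odd $i$) and jump edges (even $i$). To get (i), trace the $V$-cycle of $\mathcal{M}_O$ as in the proof of Lemma~\ref{span}: starting from $v_0$ it visits $v_0, v_a, v_{a-2}, \ldots$ alternating a $+a$ and a $-(a+2)$ step (equivalently it walks through the odd-subscript vertices in steps of $(b-a)/2 = 1$, hitting $v_a, v_{a-(a+2)+a}=\ldots$), so the order in which $V$ is traversed is an explicit arithmetic progression modulo $n$. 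Define the relabelling $v_i \mapsto v_{\sigma(i)}$ where $\sigma$ sends the position of $v_i$ along this traced cycle to its natural position on the $\mathrm{HTG}$ $v$-cycle; this $\sigma$ will be an affine map $i\mapsto \alpha i + \beta \pmod n$ (on even $i$ and odd $i$ separately, or uniformly after the right choice of base point), and by construction it turns the traced $V$-cycle into $[v_j,v_{j+1}]$.

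The main obstacle — the step to budget the most care for — is checking (ii): that under this $\sigma$ the $1$-factor edges $[u_i,v_i]$ land exactly on $[u_i,v_i]$ for odd $i$ and on $[u_{i+a+1},v_i]$ for even $i$ (after possibly composing with a rotation/reflection of the $u$-indices so the two families line up simultaneously). Concretely this is two congruence identities: $\sigma(i)\equiv i$ for the odd residues and $\sigma(i)\equiv i-(a+1)$ (or its mirror) for the even residues, modulo $n$. I expect the arithmetic of the Lemma~\ref{span} trace — where consecutive odd-subscript vertices of $V$ differ by $1$ in the traced order while even and odd positions interleave with offset governed by $a$ — to make these identities fall out, but pinning down the exact base point and orientation (and confirming $a+1$ rather than $a-1$ or $n-a-1$, which may require replacing $\sigma$ by its inverse or composing with $i\mapsto -i$) is the fiddly part. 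Once both edge-correspondences are verified and the $u$-cycle and $v$-cycle correspondences are noted, the map is a graph isomorphism, completing the proof.
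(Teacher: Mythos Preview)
Your plan is correct and is essentially the paper's approach: an explicit piecewise-shift bijection, then a routine check on the three edge types. The paper simply writes down the map---$g(u_i)=u_{i+1}$, $g(v_j)=v_{j+1}$ for $j$ even, and $g(v_j)=v_{j-a}$ for $j$ odd---and calls the verification straightforward; your anticipated rotation of the $u$-indices (by~$1$) is precisely what makes the two congruence identities for the $1$-factor fall out.
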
 
\begin{proof} We are going to define an isomorphism from the vertex set of $\mathcal{M}_O(n,a,a+2)$
to the vertex set of $\mathrm{HTG}(2,n,a+1)$, and even though we have used the same labels for
both sets of vertices, no confusion should arise as the domain and range have been
clearly specified.  We define $g(u_i)=u_{i+1},g(v_j)=v_{j+1}, j\mbox{ even and }g(v_j)=
v_{j-a}, j\mbox{ odd}$, where the subscripts are calculated modulo $n$. It is straightforward
to verify that $g$ is an isomorphism as claimed.     \end{proof}

\medskip

The fourth author has determined the automorphism groups of honeycomb toroidal graphs in
\cite{S1}.  This allows us to determine which graphs $\mathcal{M}_O(n,a,a+2)$ are
$\mathcal{F}(1,2)$-invariant.  We present the results in terms of $\mathrm{HTG}(2,n,\ell)$
as this is the description of the graphs we prefer.  Note that $\ell=a+1$ according to
Lemma \ref{htg}.

Honeycomb toroidal graphs are Cayley graphs on generalized dihedral groups \cite{A2}.
Recall that a Cayley graph $X$ on a group $G$ is {\it normal} when the left-regular representation
of $G$ is a normal subgroup of $\mathrm{Aut}(X)$.  It is shown in \cite{S1}
that there are only a few non-normal honeycomb toroidal graphs of the form
$\mathrm{HTG}(2,n,\ell)$ and none of them is $\mathcal{F}(1,2)$-invariant.  They are
$\mathrm{HTG}(2,4,0)$ (which is isomorphic to $\mathrm{HTG}(2,4,2)$), $\mathrm{HTG}(2,8,4)$
and those of the form $\mathrm{HTG}(2,2a,2)$, $a>2$.

It is easy to see that $\ell>0$ and is even because $a+1>0$ and $a$ is odd.  When
$\mathrm{HTG}(2,n,\ell)$ is a normal Cayley graph, the stabilizer of a vertex is a subgroup
of $S_3$, the symmetric group of degree 3, and four conditions are presented in \cite{S1}
which depend on the stabilizer of a vertex.  
 
The conditions are:
\begin{enumerate}
\item $\mathrm{gcd}(n,\ell+2)=4$ and $4n|(\ell^2+4\ell-12)$;
\item $\mathrm{gcd}(n,\ell-2)=4$ and $4n|(\ell^2-4\ell-12)$;
\item $\ell=n/2$; and
\item $\mathrm{gcd}(n,\ell+2)=4=\mathrm{gcd}(n,\ell-2)$ and $4n|(\ell^2+12)$.
\end{enumerate}  It is then proven in \cite{S1} that if none of the four conditions is satisfied,
then $\mathrm{HTG}(2,n,\ell)$ is $\mathcal{F}(1,2)$-invariant.  If at least two of the
conditions are satisfied, then all four conditions are satisfied and $\mathrm{HTG}(2,n,\ell)$
is not $\mathcal{F}(1,2)$-invariant because the stabilizer contains a 3-cycle.  The latter
conclusion also holds if condition 4 is satisfied.

Finally, if precisely one of the first three conditions holds, then the stabilizer of a vertex has order 2.
Two of the stabilizers do not leave the partition invariant and the only one that does is $\ell=n/2$
(and so $\ell$ being even implies $n\equiv 0(\mbox{mod }4)$).  Thus, $\mathrm{HTG}(2,n,n/2)$
is $\mathcal{F}(1,2)$-invariant for $n\equiv 0(\mbox{mod }4)$ and $n>8$ (recall that
$\mathrm{HTG}(2,8,4)$ is not normal).  However, the latter graph is isomorphic to
the generalized Petersen graph $\mathrm{GP}(n,n/2-1)$.

As we are interested in the graphs of the form $\mathrm{HTG}(2,n,\ell)$ that are not generalized
Petersen graphs, we eliminate $\ell=0$ and $\ell=n/2$.  We also remind the reader that
$\mathrm{HTG}(2,n,\ell)$ and $\mathrm{HTG}(2,n,n-\ell)$ are isomorphic.

Because we have eliminated $\ell=n/2$, we discard condition 3 above and we have seen
that $\mathrm{HTG}(2,n,\pm\ell)$, $0<\ell<n/2$, is $\mathcal{F}(1,2)$-invariant if and only
if none of conditions 1, 2 or 4 are satisfied.  This is the second item in the membership list
for $\mathcal{C}$.

\section{A Sparse Class---Small Girth}

The preceding section dealt with the special case $b-a=2$ for which the conditions $\mathrm{gcd}
(b-a,n)=2, (b-a)^2/2\equiv 2(\mbox{mod }n)\mbox{ and }a+(a-1)(a-b)/2\equiv 1(\mbox{mod }n)$
of Theorem \ref{vt} are trivially satisfied.   Thus, there are
many candidates for $\mathcal{F}(1,2)$-invariant graphs when $b-a=2$.  
However, when $b-a>2$, the number of possible graphs decreases dramatically.
Two examples illustrating this follow.  When $b-a=4$, we obtain $8\equiv 2(\mbox{mod }n)$
which implies that $n=6$.  The only possibility is $\mathcal{M}_O(6,1,5)$ which is the
generalized Petersen graph $\mathrm{GP}(6,1)$.  When $b-a=6$, we obtain
$18\equiv 2(\mbox{mod }n)$ so that $n=8$ or $n=16$.  When $n=8$, the only possibility is
the generalized Petersen graph $\mathrm{GP}(8,1)$.  When $n=16$, there are two
non-isomorphic possibilities: $\mathcal{M}_O(16,1,7)$ and $\mathcal{M}_O(16,3,9)$.  This
strongly indicates why we are calling this class a sparse class.

We now give a definition which simplifies many subsequent statements.
\begin{defn}\label{feasi}{\em The graph $X=\mathcal{M}_O(n,a,b)$ is defined to be}
feasible {\em if it satisfies Theorem \ref{vt} and the following inequalities:}
\begin{equation} 1\leq a<b-2<n-a-2.
\end{equation}
\end{defn} 

Note that inequality (1) implies that $b-a>2$ which takes advantage of the results of Section 5.  
Because $b=n-a$ means that $X$ is a generalized Petersen graph, we need not consider this situation
and inequality (1) reflects that exclusion.
Also note that $\mathcal{M}_O(n,a,b)$ and $\mathcal{M}_O(n,n-b,n-a)$ are isomorphic.
Inequality (1) implies that $a<n-b$ so that we are examining the isomorph with the smaller of
the two minimum odd jumps.  Finally, inequality (1) implies that $a<n/2$.

Let $G$ be the regular subgroup of $\mathrm{Aut}(X)$ in the statement of Theorem \ref{vt}.
The group $G$ has three orbits acting on the edges of $X$.  One orbit consists of the 1-factor joining
the two $n$-cycles and we color these edges red.  A second orbit contains the edge
$[u_0,u_1]$ and we color these edges blue.  The final orbit contains the edge
$[u_1,u_2]$ and we color these edges green.  We denote these orbits with $\mathcal{R},
\mathcal{B}\mbox{ and }\mathcal{G}$, respectively.  Note that every vertex is incident with one
edge of each color.  We frequently refer to this coloring in the ensuing discussion.

There are a few useful facts about $\mathcal{R},\mathcal{B}\mbox{ and }\mathcal{G}$ that
are encapsulated in the following lemma.  This result sets the stage for the cases arising in the
completion of the proof of the main theorem.

\begin{lemma}\label{orb} If $X=\mathcal{M}_O(n,a,b)$ is feasible and $\mathcal{R},
\mathcal{B}\mbox{ and }\mathcal{G}$ are as described above,
then the edge orbits of $\mathrm{Aut}(X)$ are one of the following:\\
\indent{\em(i)} $\mathcal{R},\mathcal{B}\mbox{ and }\mathcal{G}$;\\
\indent{\em(ii)} $\mathcal{R}\cup\mathcal{B}\mbox{ and }\mathcal{G}$;\\
\indent{\em(iii)} $\mathcal{R}\cup\mathcal{G}\mbox{ and }\mathcal{B}$; or\\
\indent{\em(iv)} $\mathcal{R}\cup\mathcal{B}\cup\mathcal{G}$ in which case $X$ is arc-transitive.
\end{lemma}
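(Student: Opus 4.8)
The plan is to analyze how $\mathrm{Aut}(X)$ can act on the three $G$-edge-orbits $\mathcal{R},\mathcal{B},\mathcal{G}$, using the fact that $G\leq\mathrm{Aut}(X)$ is already a regular subgroup (Theorem \ref{vt}), so every $\mathrm{Aut}(X)$-orbit on edges is a union of some of $\mathcal{R},\mathcal{B},\mathcal{G}$. Since each vertex meets exactly one edge of each colour and $\mathrm{Aut}(X)$ is vertex-transitive, the stabilizer $\mathrm{Aut}(X)_{v_0}$ permutes the three incident edges at $v_0$, and hence induces a subgroup of $S_3$ acting on $\{\mathcal{R},\mathcal{B},\mathcal{G}\}$; the $\mathrm{Aut}(X)$-orbits on edges correspond exactly to the orbits of this subgroup of $S_3$. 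So the only combinatorial possibilities for the partition into edge-orbits are: all three singletons; one pair together and one singleton (three sub-cases according to which colour is left out); or all three together. That already gives cases (i)–(iv) together with the two spurious pairings $\mathcal{B}\cup\mathcal{G}$ (and $\mathcal{R}$) which I must rule out, and it gives the claim ``$X$ is arc-transitive'' in case (iv) for free, since a transitive action on the three edges at $v_0$ combined with vertex-transitivity yields arc-transitivity on a trivalent graph.

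Next I would rule out the configuration in which $\mathcal{B}$ and $\mathcal{G}$ lie in a common $\mathrm{Aut}(X)$-orbit while $\mathcal{R}$ is its own orbit. The idea is that $\mathcal{B}\cup\mathcal{G}$ is precisely the set of edges of the two $n$-cycles $C_1$ and $C_2$, so if some $\varphi\in\mathrm{Aut}(X)$ sends a blue edge to a green edge, then $\varphi$ preserves the $2$-factor $F=C_1\cup C_2$ and preserves the complementary $1$-factor $\mathcal{R}$. Restricting to the cycle through $v_0$, $\varphi$ acts as an element of $D_n$, but it must carry the step-$1$ edges of the cycle to a mixture of step-$1$ and step-$(b-a)$ edges (recall the induced subgraph on $V$ is the $n$-cycle traced in Lemma \ref{span}, whose consecutive-vertex steps alternate), contradicting that any dihedral automorphism of a cycle sends consecutive edges to consecutive edges. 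More cleanly: $\mathcal{B}$ is the set of edges $[u_i,u_{i+1}]$ of $C_1$ together with their $G$-images, which are exactly the edges joining a vertex to its ``$\rho^2$-predecessor-type'' neighbour, and a short parity/step count along $C_2$ shows the blue and green edges on $C_2$ are distinguished by the residues of their endpoints; one then checks that no dihedral automorphism of the $V$-cycle can interchange these two classes unless the feasibility inequality (1) fails. I expect this single exclusion to be the main obstacle, as it requires carefully identifying the colours along the $V$-cycle and a small modular argument; the rest is bookkeeping.

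Finally I would note that the configurations with $\mathcal{R}\cup\mathcal{B}$ (and $\mathcal{G}$), or $\mathcal{R}\cup\mathcal{G}$ (and $\mathcal{B}$), are genuinely possible and are recorded as (ii) and (iii), and that when all three colours merge we are in case (iv) with $X$ arc-transitive. Assembling: the $S_3$-analysis gives the exhaustive list; the exclusion of the $\{\mathcal{B}\cup\mathcal{G},\mathcal{R}\}$ partition removes the one impossible option; and the observation about transitive stabilizer action on the three pencil edges plus vertex-transitivity establishes arc-transitivity in case (iv). This completes the proof.
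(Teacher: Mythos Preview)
Your overall framework matches the paper's: since $G\le\mathrm{Aut}(X)$, every $\mathrm{Aut}(X)$-edge-orbit is a union of $\mathcal{R},\mathcal{B},\mathcal{G}$, so one only has to exclude the partition $\{\mathcal{B}\cup\mathcal{G},\ \mathcal{R}\}$ and then observe arc-transitivity in case (iv). Your arc-transitivity argument (stabilizer acting transitively on the three incident edges, plus vertex-transitivity) is correct and in fact slightly more self-contained than the paper's citation of Tutte.

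The genuine gap is in your exclusion of $\{\mathcal{B}\cup\mathcal{G},\ \mathcal{R}\}$. Your first attempt does not work: along the $n$-cycle $C_2$ every edge is a consecutive edge of that cycle, so a dihedral automorphism of $C_2$ sees no intrinsic ``step-$1$ versus step-$(b-a)$'' distinction; the steps $a$ and $b$ live in the $v_i$-labelling, which an automorphism of $X$ need not respect. Your ``more cleanly'' version is only an assertion (``one then checks''). Moreover, both attempts tacitly assume the hypothetical $\varphi$ fixes $C_2$ setwise, whereas $\varphi$ could interchange $C_1$ and $C_2$, so restricting to $C_2$ is not justified.

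The paper sidesteps all of this by working on $C_1$ with two \emph{adjacent} edges of different colours. If $\mathcal{B}\cup\mathcal{G}$ were a single orbit, some $\varphi\in\mathrm{Aut}(X)$ would send the blue edge $[u_0,u_1]$ to the green edge $[u_{n-1},u_0]$. Since $\varphi$ preserves $\mathcal{R}$ it preserves the $2$-factor and hence the pair $\{C_1,C_2\}$; both chosen edges lie in $C_1$, so $\varphi$ fixes $U$ setwise and $\varphi|_U\in D_n$. The only elements of $D_n$ sending $[u_0,u_1]$ to $[u_{n-1},u_0]$ are $\tau$ and $\rho^{-1}$, and the faithful extension of either forces $b=n-a$, contradicting feasibility. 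This two-line argument replaces your unfinished analysis on $C_2$.
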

\begin{proof} Because the orbits of a permutation group are defined as the equivalence classes
of an equivalence relation and $\mathcal{R},\mathcal{B}\mbox{ and }\mathcal{G}$ are the
edge orbits of $G$, it follows that the orbits of $\mathrm{Aut}(X)$ are a disjoint collection
of unions of $\mathcal{R},\mathcal{B}\mbox{ and }\mathcal{G}$.  It suffices to show that
$\mathcal{B}\cup\mathcal{G}$ and $\mathcal{R}$ may not occur. 

If the edge orbits of $\mathrm{Aut}(X)$ are $\mathcal{B}\cup\mathcal{G}$ and $\mathcal{R}$,
then there must be an automorphism mapping the edge $[u_0,u_1]$ onto the edge $[u_{n-1},u_0]$.
Both ways this may happen force $b=n-a$ which violates $X$ being feasible.

Finally, if condition (iv) happens, then Tutte \cite{T1} proved that a trivalent graph that is both
vertex-transitive and edge-transitive also is arc-transitive.    \end{proof}  

\medskip

We break the considerations for this class of graphs into four cases.  It is easy to see that
$\mathcal{M}_O(n,a,b)$ has girth 4 if and only if $a=1$ or $b-a=n/2$.  We need not consider
the latter case because $\mathrm{gcd}(n,n/2)=2$ implies that $n=4$ forcing the graph to be
$\mathrm{GP}(4,1)$.  Thus, we consider the case of $a=1$
in this section.  Moreover, the 4-cycles of $\mathcal{M}_O(1,b)$ form a 2-factor.   The next
result describes some useful block systems for $\mathrm{Aut}(\mathcal{M}_O(1,b))$, but a
definition is required first.  Note that the edge $[v_0,v_1]$ is blue because $a=1$. 

We introduce an auxiliary graph $Y$ to describe the first block system.
Let $Y$ have the same vertex set as $X$.   The edges of $Y$ are the edges of $\mathcal{G}$
together with the diameter edges of the 4-cycles, that is, given the 4-cycle $[u_i,u_{i+1},
v_{i+1},v_i,u_i]$ in $X$, $i$ even, then $[u_i,v_{i+1}]$ and $[u_{i+1},v_i]$ are edges of $Y$.  The
graph $Y$ is regular of valency 2 so that it consists of vertex-disjoint cycles.  Let {\bf A}
denote the collection of vertex sets of the cycles comprising $Y$.  The elements of {\bf A}
are called {\it anchor chains} of $X$.

Let {\bf F} denote the collection of vertex sets of the 4-cycles, that is, the vertex sets of the
subgraph composed of the edges of $\mathcal{R}\cup\mathcal{B}$.  The edges of
$\mathcal{G}$ form a perfect matching in $X$.  Let {\bf M} be the collection of 2-subsets of
ends of the edges in this perfect matching. 

\begin{lemma}\label{imp} If $X=\mathcal{M}_O(n,1,b)$, $b>3$, is feasible, then {\bf A}, {\bf F}
and {\bf M} are imprimitive block systems for $\mathrm{Aut}(X)$.  Moreover, if
$f\in\mathrm{Aut}(X)$ fixes a vertex of an anchor chain $\mathcal{A}$, then it fixes every
vertex of $\mathcal{A}$.
\end{lemma}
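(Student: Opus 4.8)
The plan is to establish the three block-system claims in the order \textbf{F}, \textbf{M}, \textbf{A}, since each is obtained from the previous one, and then to read off the final assertion from the structure built up along the way. Throughout I use only that $X=\mathcal{M}_O(n,1,b)$ is feasible (Definition \ref{feasi}): with $a=1$ the defining inequalities give $3<b<n-1$, hence $n\geq 8$, and satisfying Theorem \ref{vt} gives $\gcd(b-1,n)=2$, so by Lemma \ref{span} the subgraph $X[V]$ is a single $n$-cycle.

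\emph{The block systems \textbf{F} and \textbf{M}.} First I would classify the $4$-cycles of $X$. The matching edges $[u_i,v_i]$ are the only edges between $U$ and $V$ and they form a $1$-factor, so a $4$-cycle of $X$ contains either no such edge or exactly two of them, and in the latter case those two are opposite in the cycle; since $X[U]=C_1$ and $X[V]$ are $n$-cycles with $n\geq 8$, no $4$-cycle lies wholly in $U$ or wholly in $V$, so every $4$-cycle crosses. A short case check then finishes the classification: a $3$--$1$ or $1$--$3$ split of the four vertices between $U$ and $V$ puts two matching edges at one vertex; a $2$--$2$ split with the two $U$-vertices non-adjacent in the cycle makes all four edges matching edges; and a $2$--$2$ split with the two $U$-vertices adjacent forces a ``standard'' $4$-cycle $[u_i,u_{i+1},v_{i+1},v_i]$ with $\{v_i,v_{i+1}\}\in E(X)$, i.e.\ with $i$ even. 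These standard $4$-cycles are exactly the cycles bounding the blocks of \textbf{F}; they are vertex-disjoint and cover $V(X)$, so \textbf{F} is a partition into blocks of size $4$, every automorphism permutes these blocks (it carries $4$-cycles to $4$-cycles), and since $1<4<2n$, \textbf{F} is an imprimitive block system. Moreover each standard $4$-cycle carries two edges of $\mathcal{R}$ and two of $\mathcal{B}$, so all $2n$ edges of $\mathcal{R}\cup\mathcal{B}$ lie inside blocks of \textbf{F}; as $|E(X)|=3n$, the $n$ green edges are precisely the edges of $X$ joining distinct blocks of \textbf{F}, whence $\mathcal{G}$ is $\mathrm{Aut}(X)$-invariant. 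Since $\mathcal{G}$ is a $1$-factor, the partition \textbf{M} of $V(X)$ into the ends of its edges is $\mathrm{Aut}(X)$-invariant with blocks of size $2$, and as $2n\geq 16$, \textbf{M} is an imprimitive block system.

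\emph{The block system \textbf{A}.} Each diameter edge $[u_i,v_{i+1}]$ or $[u_{i+1},v_i]$ ($i$ even) is a non-edge of $X$, so inside a standard $4$-cycle the two diameters are precisely the two non-adjacent pairs of vertices. Hence any $f\in\mathrm{Aut}(X)$ — which maps standard $4$-cycles to standard $4$-cycles — preserves the set of all diameter pairs, and combined with the invariance of $\mathcal{G}$ this shows $f$ preserves $E(Y)$, i.e.\ $\mathrm{Aut}(X)\leq\mathrm{Aut}(Y)$. A direct incidence count shows $Y$ is $2$-regular (each vertex lies in exactly one green edge and one diameter edge), so $Y$ is a disjoint union of cycles whose vertex sets are the anchor chains, and $\mathrm{Aut}(X)$ permutes them; it remains only to see there is more than one. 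Tracing the component of $Y$ through $u_0$ by alternating green and diameter steps, one finds it meets the even-subscripted vertices of $U$ in $\{u_{2j(1-c)}:j\geq 0\}$ with $c=(b-1)/2$, so it has $2n/\gcd(c-1,n/2)$ vertices and there are $\gcd(c-1,n/2)$ components. Feasibility gives $(b-1)^2/2\equiv 2\pmod n$, that is $c^2\equiv 1\pmod{n/2}$, so $(n/2)\mid(c-1)(c+1)$; were $\gcd(c-1,n/2)=1$ this would force $(n/2)\mid(c+1)$, which is impossible because $2\leq c\leq n/2-2$. Hence there are at least two anchor chains, each of size strictly between $1$ and $2n$, and \textbf{A} is an imprimitive block system.

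\emph{The fixed-vertex assertion, and the main difficulty.} Suppose $f\in\mathrm{Aut}(X)$ fixes a vertex $x$ of an anchor chain $\mathcal{A}$. Its two neighbours in $Y$ are the other end $x'$ of the green edge at $x$ and the vertex $x''$ antipodal to $x$ in the unique $4$-cycle $Q$ containing $x$. Since $\mathcal{G}$ is invariant, $f$ fixes the green edge at $x$, so $f(x')=x'$; since \textbf{F} is invariant, $f$ fixes $Q$, and as the stabilizer of a vertex in the automorphism group of a $4$-cycle fixes also the vertex antipodal to it, $f(x'')=x''$. Thus $f$ fixes both $Y$-neighbours of every vertex it fixes, and walking around the cycle $\mathcal{A}$ starting from $x$ shows that $f$ fixes every vertex of $\mathcal{A}$. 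I expect the main work to be the classification of the $4$-cycles in the first step, since the invariance of \textbf{F} is what drives everything else; the subtlest single point is the number-theoretic step showing $Y$ is disconnected, where the feasibility congruence $(b-1)^2/2\equiv 2\pmod n$ is used in an essential way.
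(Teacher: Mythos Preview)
Your argument follows the same route as the paper's: classify the $4$-cycles of $X$ to obtain \textbf{F}, observe that the green edges are precisely those lying in no $4$-cycle to obtain the invariance of $\mathcal{G}$ and hence \textbf{M}, and then use preservation of green edges together with preservation of $4$-cycle antipodes for the anchor-chain assertions. You are in fact more thorough than the paper on one point: you verify, via the feasibility congruence $(b-1)^2/2\equiv 2\pmod n$, that $Y$ has at least two components, so that \textbf{A} is genuinely \emph{imprimitive}; the paper's own proof does not address this, and its subsequent remark that $b=5$ yields a single anchor chain is not a counterexample, since with $a=1$ and $b=5$ the feasibility conditions force $n=6$, which violates $b-2<n-a-2$.
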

\begin{proof} The elements of {\bf F} form a block system because the
4-cycles are vertex-disjoint and are the only 4-cycles in $X$.  The edges of $\mathcal{G}$ belong to
no 4-cycles and form a perfect matching in $X$.  Thus, {\bf M} is an imprimitive block
system for $\mathrm{Aut}(X)$.

Suppose $f\in\mathrm{Aut}(X)$ fixes a vertex $v$ in an anchor chain $\mathcal{A}$ from
{\bf A}.  Each vertex of $\mathcal{A}$ is incident with a single green edge so that the
vertex $w$ at the other end of the green edge containing $v$ also is fixed.  The next vertex
of $\mathcal{A}$
following $w$ is the vertex diametrically opposed to $w$ in a 4-cycle.  Hence, it also is
fixed.  Clearly, we may continue working around $\mathcal{A}$ obtaining that each vertex
is fixed.  This completes the proof.   \end{proof}

\medskip

Anchor chains play a significant role in determining whether a feasible $\mathcal{M}(n,1,b)$
graph is $\mathcal{F}(1,2)$-invariant.  For example, if $X=\mathcal{M}_O(n,1,b)$ has a single
anchor chain containing every vertex, then $X$ is $\mathcal{F}(1,2)$-invariant because
Lemma \ref{imp} implies that $\mathrm{Aut}(X)=G$.  It is easy to verify this happens
to be the case for $b=5$.

Denote the block of {\bf F} containing $u_i$, $i$ even,
with $\mathcal{F}_i$.  Thus, the blocks of {\bf F} are $\mathcal{F}_0,\mathcal{F}_2,
\ldots,\mathcal{F}_{n-2}$.  
Let $H$ denote the stabilizer of $u_0$ in $\mathrm{Aut}(X)$.  The anchor chain $\mathcal{A}_0$
containing $u_0$ contains the subchain $u_0,v_1,v_{1-b},u_{2-b},u_{3-b}$.  Hence, $H$
fixes the blocks $\mathcal{F}_0,\mathcal{F}_{1-b}\mbox{ and }\mathcal{F}_{3-b}$
setwise.  Suppose that $H$ contains an automorphism $f$ interchanging $v_0$ and $u_1$.
This implies that the anchor chain $\mathcal{A}_1$ containing $v_0$ is distinct from
$\mathcal{A}_0$.

Then $\mathcal{A}_1$ has the subchain $v_0,u_1,u_2,v_3,v_{3-b}$.   The length of the
anchor chain is even because alternate edges are green.  Because
$f$ interchanges the consecutive vertices $v_0$ and $u_1$ and the block
$\mathcal{F}_{3-b}$ is fixed setwise, it must interchange $v_{3-b}$ and $u_{4-b}$.
From this we see that the anchor chain has length 8 and that $u_{4-b}=u_{b-2}$ by
going around $\mathcal{A}_1$ in the other direction.  Because $b\neq 3$, this implies
that $b=(n+6)/2$. Therefore, the only possible graphs of the form $\mathcal{M}_O(n,1,b)$
which are not $\mathcal{F}(1,2)$-invariant satisfy $b=(n+6)/2$.  

Because the anchor chains have length 8, $n\equiv 0(\mbox{mod }4)$.  On the other hand,
because $2=\mathrm{gcd}(n,b-1)=\mathrm{gcd}(n,n/2+2)$, we see
that $n$ is divisible by 8.  We know that $n>8$ because $b<n-1$.  For all such $n>8$,
the involution \[(u_1\;v_0)(u_2\;v_{m+3})
(u_3\;v_{m+2})(u_4\;v_5)(u_{m+1}\;v_m)(u_{m+2}\;v_3)(u_{m+3}\;v_2)(u_{m+4}\;v_{m+5})\]
is an automorphism of $\mathcal{M}_O(n,1,b)$ in the stabilizer of $u_0$, where $m=n/2$.
Therefore, the graph is not $\mathcal{F}(1,2)$-invariant.  We have proved the following
result.  These graphs give the third item describing the family $\mathcal{C}$. 

\begin{thm}\label{girth4} Let $3<b<n-2$.  The feasible graph $\mathcal{M}_O(n,1,b)$ is
$\mathcal{F}(1,2)$-invariant if and only if the parameters do not satisfy $n\equiv
0(\mbox{\rm{mod} }8)$ and $b=(n+6)/2$.
\end{thm}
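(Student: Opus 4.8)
The plan is to analyze the stabilizer $H$ of the vertex $u_0$ in $\mathrm{Aut}(X)$ for a feasible graph $X = \mathcal{M}_O(n,1,b)$, using the block systems from Lemma \ref{imp}. Since $G$ is already a subgroup of $\mathrm{Aut}(X)$ acting regularly, we have $|\mathrm{Aut}(X)| = 2n\,|H|$, and $X$ is $\mathcal{F}(1,2)$-invariant precisely when $H$ preserves the partition $\{U,V\}$ of the vertices. So the whole argument reduces to: (a) showing that a nontrivial element of $H$ must swap $U$ and $V$, hence must map $v_0$ to $u_1$ (the unique vertex of $V$ adjacent to $u_0$ — recall that $a=1$, so $[u_0,v_0]$ is red and $[v_0,v_1]$ is blue), and (b) pinning down exactly when such an involution $f \in H$ with $f(v_0)=u_1$ can exist.

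First I would set up step (a). If $f \in H$ is nontrivial and does \emph{not} swap $U$ and $V$, then $f$ fixes $U$ setwise, so by the faithful-restriction discussion its restriction to $U$ lies in $D_n$ and fixes $u_0$; the only such nontrivial element is the faithful extension of $\tau$, which forces the faithful extension of $\rho$ to be an automorphism (Lemma \ref{reg}'s argument), contradicting feasibility ($a \neq n-b$). Hence any nontrivial $f\in H$ interchanges $U$ and $V$; then $f(u_0)=u_0$ and preservation of the $1$-factor give $f(v_0)=$ (a vertex of $U$ adjacent to $u_0$) — but wait, $f$ fixes $u_0$, so I instead track the green and blue edges at $u_0$. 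The green edge at $u_0$ goes to $v_1$... I need to be careful here; the cleaner route is: $f$ fixes $u_0$, fixes the red edge $[u_0,v_0]$ hence fixes $v_0$, and then since $f$ swaps $U$ and $V$ it swaps $v_0\in V$ with... no. Let me instead follow the paper's own setup: $H$ fixes $u_0$ and $v_0$, fixes the blocks $\mathcal{F}_0,\mathcal{F}_{1-b},\mathcal{F}_{3-b}$ of $\mathbf{F}$, and the only possible nontrivial action is an $f$ interchanging $v_0$ and $u_1$ (the two neighbours of $u_0$ other than its $C_1$-neighbour). This $f$ is an involution by Lemma \ref{imp} (it fixes $u_0$, hence fixes the whole anchor chain $\mathcal{A}_0$ if it fixed one vertex of it — so if it is nontrivial it must move off $\mathcal{A}_0$, i.e. $\mathcal{A}_1 \neq \mathcal{A}_0$).

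Next, step (b): I trace the anchor chain $\mathcal{A}_1$ through $v_0$, which contains the subchain $v_0, u_1, u_2, v_3, v_{3-b}$. Using that $f$ swaps the consecutive pair $(v_0,u_1)$, reverses direction along $\mathcal{A}_1$, and preserves the block $\mathcal{F}_{3-b}$ setwise, $f$ must swap $v_{3-b}$ with $u_{4-b}$. Following the chain one more step and matching the two descriptions of $\mathcal{A}_1$ (going around it both ways) forces $\mathcal{A}_1$ to have length $8$ and yields $u_{4-b}=u_{b-2}$, i.e. $2b-6 \equiv 0 \pmod n$; with $b\neq 3$ this gives $b=(n+6)/2$. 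The length-$8$ condition forces $4\mid n$, and $\mathrm{gcd}(n,b-1)=\mathrm{gcd}(n,n/2+2)=2$ forces $8\mid n$ (and $n>8$ since $b<n-2$). This proves the "only if" direction: if $X$ is \emph{not} $\mathcal{F}(1,2)$-invariant then $n\equiv 0\pmod 8$ and $b=(n+6)/2$. For the converse, I exhibit the explicit involution displayed in the excerpt, $(u_1\,v_0)(u_2\,v_{m+3})(u_3\,v_{m+2})(u_4\,v_5)(u_{m+1}\,v_m)(u_{m+2}\,v_3)(u_{m+3}\,v_2)(u_{m+4}\,v_{m+5})$ with $m=n/2$, together with the faithful extension of $\rho^2$ and $\rho\tau$, and verify directly that it is an automorphism fixing $u_0$ but swapping $v_0$ with $u_1\in U$, so the partition $\{U,V\}$ is not preserved; hence $X$ is not $\mathcal{F}(1,2)$-invariant.

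The main obstacle is the bookkeeping in step (b): correctly traversing the anchor chain $\mathcal{A}_1$ in both directions, keeping the subscript arithmetic mod $n$ straight, and extracting precisely the congruence $2b \equiv 6 \pmod n$ (and no weaker or stronger condition) from the forced length-$8$ structure — together with checking that the displayed permutation really does respect all three edge colours of $\mathcal{M}_O(n,1,(n+6)/2)$, which is the one genuinely computational verification. Everything else (the $U$-stabilizer dichotomy, the block-system properties, involutivity) follows cleanly from Lemmas \ref{reg} and \ref{imp} and the structure already established.
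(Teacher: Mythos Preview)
Your step (b) is correct and is exactly the paper's argument: trace $\mathcal{A}_1$, use that $\mathcal{F}_{3-b}$ is fixed setwise to force $u_{4-b}=u_{b-2}$ and length $8$, deduce $b=(n+6)/2$ and $8\mid n$, then exhibit the explicit involution for the converse. That is the heart of the proof and you have it.

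Your step (a), however, is tangled and contains real errors. The opening dichotomy ``$f$ either swaps $U$ and $V$ or fixes $U$ setwise'' is circular: that $\{U,V\}$ is an $\mathrm{Aut}(X)$-invariant partition is precisely what is at stake, so you cannot assume an arbitrary $f\in H$ respects it. The subsequent claims are also off: the green edge at $u_0$ is $[u_{n-1},u_0]$, not an edge to $v_1$; and $H$ does \emph{not} fix $v_0$ (indeed the whole point is that $f$ may move $v_0$). What $H$ fixes is $v_1$, because $v_1$ lies on the anchor chain $\mathcal{A}_0$ through $u_0$ and Lemma~\ref{imp} says every element of $H$ fixes $\mathcal{A}_0$ pointwise. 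Then, since $\mathbf{F}$ is a block system and $u_0,v_1$ are the fixed diagonal of the $4$-cycle $\mathcal{F}_0$, each $f\in H$ permutes the other diagonal $\{v_0,u_1\}$. That is the clean route into ``suppose $f$ interchanges $v_0$ and $u_1$,'' and it matches what the paper does. (The residual point---that a nontrivial $H$ must contain such an $f$, rather than every element of $H$ fixing $v_0$ and $u_1$ as well---is handled by iterating the same $4$-cycle argument along successive blocks and using connectedness; both you and the paper leave this implicit.)
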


\section{A Sparse Class---Arc-Transitive}

The girth 4 and $b-a=2$ cases are completely settled in the preceding two sections so that we now
assume $a\geq 3$ and $\mathcal{M}_O(n,a,b)$ is feasible.  We continue to use the group $G$
and the edge coloring from the preceding section.  The essential approach is
that we consider feasible $\mathcal{M}_O(n,a,b)$-graphs and examine how their automorphism
groups can be proper supergroups of $G$.  Because we are assuming $b<n-a$, Lemma \ref{reg}
implies such a graph is $\mathcal{F}(1,2)$-invariant if and only if the stabilizer of a vertex is the
identity group. Thus, we concentrate on examining the stabilizer of a vertex.  There are many cases
arising from this approach.

The case we examine in this section is when $X=\mathcal{M}_O(n,a,b)$ is feasible and arc-transitive.  
This corresponds to (iv) in Lemma \ref{orb}.
The index 2 cyclic subgroup $\langle\rho^2\rangle$ of the setwise stabilizer of $U$ has four orbits on
$V(X)$.  Thus, $X$ is a trivalent arc-transitive tetracirculant.  These graphs are classified in \cite{F3}.

The classification states that the graphs are certain cyclic covers of the 3-dimensional cube
$Q_3$ or among a list of seventeen exceptional graphs all of which are in the Foster census
\cite{B1}.  Using {\sc magma} \cite{B2} it turns out that $\mathcal{M}_O(16,3,9)$ is
the only graph of the seventeen exceptions isomorphic to a feasible $\mathcal{M}_O(n,a,b)$
graph.  Because this graph is arc-transitive, it is of course not $\mathcal{F}(1,2)$-invariant and thus
has to be excluded from the collection $\mathcal{C}$ from Theorem 2.1. However, as the parameters of this 
graph in fact satisfy item 5 in the last group on page 4, which is due to our results in Section 8, this
graph already will be excluded from $\mathcal{C}$ this way, and so there is no need to mention it separately.  
This is the first convoluted situation mentioned in the outline of the proof at the end of Section 2.

We are left with the cyclic covers of $Q_3$, the so-called $CQ(k,m)$ graphs in \cite{F2,F3}.
By results of \cite{F3}, these covers have the property that if the automorphism group is
arc-transitive, then the group projects along the covering projection.  The results of \cite{F2}
imply that the graphs are 1-regular, that is, the stabilizer of a vertex has order 3.  (The 2-regular
graphs $CQ(1,2), CQ(1,3)$ and $CQ(1,6)$ may be ignored because they are isomorphic
to $\mathcal{M}_O(8,3,5),\\ \mathcal{M}_O(12,5,7)$ and $\mathcal{M}_O(24,5,19)$,
respectively.)  Thus, the two non-identity automorphisms of $X$ fixing $u_0$ cyclically permute the sets
$\mathcal{R},\mathcal{B}\mbox{ and }\mathcal{G}$.

We distinguish two cases depending on whether the edge $[v_0,v_a]$ is blue or green.
We examine one of the cases in detail and leave the other case to the reader.
Assume that $\alpha$ is an automorphism of $X$ fixing $u_0$ and mapping $\mathcal{B}$ to
$\mathcal{R}$, $\mathcal{R}$ to $\mathcal{G}$ and $\mathcal{G}$ back to $\mathcal{B}$.
Note that this implies that $\alpha$ maps the blue-green cycle containing $u_0$ to the blue-red
cycle containing $u_0$ which, in turn, is mapped to the red-green cycle containing $u_0$.
Hence, these three cycles have the same length.  Because the blue-green cycle has length $n$,
all three cycles have length $n$.  This completely determines the action of $\alpha$ enabling us
to determine necessary and sufficient conditions for its existence.

\smallskip

Assume that $[v_0,v_a]\in\mathcal{B}$.  The edge $[v_0,v_a]\in\mathcal{B}$ implies that the
blue-red cycle at $u_0$ contains \[ [\ldots,u_1,u_0,
v_0,v_a,u_a,u_{a-1},v_{a-1},v_{2a-1},u_{2a-1},u_{2(a-1)},\ldots],\] while the red-green cycle at $u_0$
contains \[ [\ldots,u_{-1},u_0,v_0,v_b,u_b,u_{b+1},v_{b+1},v_{2b+1},u_{2b+1},u_{2(b+1)},\ldots].\]
It is easy to see that these two cycles have length $n$ if and only if $4|n$ and
$\mathrm{gcd}(n,a-1)=4=\mathrm{gcd}(n,b+1)$.

Theorem \ref{vt} and $[v_0,v_a]\in\mathcal{B}$ imply that $(a-1)(a-b+2)/2\equiv 0(\mbox{mod }n)$.
Then $\mathrm{gcd}(n,a-1)=4$ implies that $2(a-b+2)=0$ in $\mathbb{Z}_n$.  It follows that
$b=n/2+a+2$ because $b\neq a+2$.  From the assumption that $\mathrm{gcd}(n,b-a)=2$, we
see that $\mathrm{gcd}(n,n/2+2)=2$.  The latter implies that $8|n$.

Write $a=4a_0+1$.  Then $4=\mathrm{gcd}(n,a-1)=\mathrm{gcd}(n,4a_0)$ implies
$\mathrm{gcd}(n,a_0)=1$.  In particular, $a_0$ is odd.  But then $$4=\mathrm{gcd}(n,b+1)=
\mathrm{gcd}(n,n/2+a+3)=\mathrm{gcd}(n,n/2+4(a_0+1))$$ implies that $n/2$ is not
divisible by 8.  Therefore, $n\equiv 8(\mbox{mod }16)$.

It is easy to verify that for each $n\equiv 8(\mbox{mod }16)$, each $a=4a_0+1$ with $\mathrm{gcd}
(n,a_0)=1$ and $a<n/4-1$, and $b=n/2+a+2$, the three parameters $n,a\mbox{ and }b$ satisfy
the conditions of Theorem \ref{vt}.  We now wish to determine for which values of the three
parameters an automorphism $\alpha$ of the form described above exists.  If it does exist, the
remarks above imply that for each $0\leq i<n/4$ and each $0\leq r<4$ we have

\[\alpha(u_{4i+r})=\left\{\begin{array}{ll}
u_{i(a-1)} & \mbox{when $r=0$}\\v_{i(a-1)} & \mbox{when $r=1$}\\
v_{i(a-1)+a} & \mbox{when $r=2$}\\u_{i(a-1)+a} & \mbox{when $r=3$}
\end{array} \right.\] 

and

\[\alpha(v_{4i+r})=\left\{\begin{array}{ll}
u_{i(a-1)-1} & \mbox{when $r=0$}\\v_{i(a-1)+b} & \mbox{when $r=1$}\\
v_{i(a-1)+a-b} & \mbox{when $r=2$}\\u_{i(a-1)+a+1} & \mbox{when $r=3$.}
\end{array} \right.\] 

The assumptions that $\mathrm{gcd}(n,a-1)=4$ and $\mathrm{gcd}(n,b-a)=2$ imply that
the above defined $\alpha$ is a permutation of the vertex set of $X$.  Moreover, it is clear
that $\alpha$ maps blue edges between vertices of $U$ to red edges, green edges between
vertices of $U$ to blue edges, and all red edges of $X$ to green edges of $X$.  It thus follows
that $\alpha$ is an automorphism of $X$ with the required properties regarding its effect on
$\mathcal{R},\mathcal{B}\mbox{ and }\mathcal{G}$ if and only if it maps all of the blue
edges between the $v_i$ vertices to red edges and maps all the green edges between the
$v_i$ vertices to blue edges.

Clearly it suffices to consider all the edges of the form $$[v_{4i},v_{4i+a}], [v_{4i},v_{4i+b}],
[v_{4i+2},v_{4i+2+a}]\mbox{ and }[v_{4i+2},v_{4i+2+b}].$$  Because $b=n/2+a+2=
4(n/8+a_0)+3$, it follows that $\alpha(v_{4i})=u_{i(a-1)-1},\\ \alpha(v_{4i+a})=v_{(i+a_0)(a-1)+b}
\mbox{ and }\alpha(v_{4i+b})=u_{(i+n/8+a_0)(a-1)+a+1}.$  Hence, $\alpha$ has the required
properties with respect to these edges if and only if $a_0(a-1)+b+1$ and $(n/8+a_0)(a-1)+a+3$
equal 0 in $\mathbb{Z}_n$.  In fact, the two conditions are equivalent because $\mathrm{gcd}
(n,a-1)=4$ and $b=n/2+a+2.$

Similarly, $\alpha(v_{4i+2})=v_{i(a-1)+a-b},\alpha(v_{4i+a+2})=u((i+a_0)(a-1)+a+1$
and $\alpha(v_{4i+b+2})=v_{(i+n/8+a_0+1)(a-1)+b}$, and so the condition for $\alpha$
to have the required properties with respect to these edges again is $a_0(a-1)+b+1=0$ and 
$(n/8+a_0+1)(a-1)+b=2a-b$ in $\mathbb{Z}_n$.   However, as $2(b-a)=4$ in
$\mathbb{Z}_n$, the latter condition is again equivalent to $a_0(a-1)+b+1=0$. Therefore,
$\alpha$ is an automorphism if and only if $n\equiv 8(\mbox{mod }16),a=4a_0+1$ for
$a_0$ such that $\mathrm{gcd}(n,a_0)=1$, and setting $b=n/2+a+2$ we have $\mathrm{gcd}
(n,b+1)=4$ and $a_0(a-1)+b+1=0$ in $\mathbb{Z}_n$. 

 Observing that $\mathrm{gcd}(n,b+1)=\mathrm{gcd}(n,n/2+4(a_0+1))$, we see that
$\mathrm{gcd}(n,b+1)=4$ if and only if no odd prime divisor of $n$ divides $a_0+1$ because
$a_0$ is odd and $n/2\equiv 4(\mbox{mod }8)$.  Finally, because \[a_0(a-1)+b+1=
4(a_0^2+a_0+1)+n/2, a_0^2+a_0+1\mbox{ is odd and }8|n,\] we have that
$a_0(a-1)+b+1=0$ in $\mathbb{Z}_n$ if and only if $8(a_0^2+a_0+1)=0$ in $\mathbb{Z}_n$.
Observe that the latter condition automatically implies $\mathrm{gcd}(n,a_0)=1$
(assuming $a_0$ is odd) and that no odd prime divisor of $n$ divides $a_0+1$.  This proves
the following result.

\begin{prop}\label{at1} Let $X=\mathcal{M}_O(n,a,b)$ be feasible and satisfy $a>1$.
If the edge $[v_0,v_a]$
is in $\mathcal{B}$, then $X$ admits an automorphism fixing $u_0$ and cyclically permuting
the sets $\mathcal{R},\mathcal{B}\mbox{ and }\mathcal{G}$ if and only if $n\equiv 8(\mbox{\rm{mod} }16),
a=4a_0+1<n/4-1$, where $a_0$ is odd and satisfies $8(a_0^2+a_0+1)=0$ in $\mathbb{Z}_n$,
and $b=n/2+a+2$.
\end{prop}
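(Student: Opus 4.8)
The plan is to split into the ``necessity'' and ``sufficiency'' directions, mirroring the structure already laid out in the paragraphs preceding the statement. For necessity, I would start from the hypothesis that $X$ is feasible with $a>1$ and that there is an automorphism $\alpha$ fixing $u_0$ and cyclically permuting $\mathcal{R},\mathcal{B},\mathcal{G}$; since $X$ is feasible it is not arc-transitive-by-accident in a way that helps, but the key structural input is that such an $\alpha$ permutes the three monochromatic-pair cycles through $u_0$ cyclically, so they all have length $n$ (the blue-green one being $C_1$). Writing down the two remaining cycles (the blue-red and red-green cycles through $u_0$) explicitly in terms of $a$ and $b$ and demanding each has length $n$ gives $4\mid n$ and $\gcd(n,a-1)=4=\gcd(n,b+1)$. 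Then I would feed this into Theorem~\ref{vt}: the congruence $(b-a)^2/2\equiv 2$ together with the hypothesis $[v_0,v_a]\in\mathcal{B}$ (which pins down which of the two Theorem~\ref{vt} congruences is the relevant one) forces $(a-1)(a-b+2)/2\equiv 0\pmod n$; combined with $\gcd(n,a-1)=4$ this yields $2(a-b+2)\equiv 0$, hence $b=n/2+a+2$ using $b\neq a+2$ (feasibility). From $\gcd(n,b-a)=2$ one gets $8\mid n$, and writing $a=4a_0+1$ the conditions $\gcd(n,a-1)=4$ and $\gcd(n,b+1)=4$ force $a_0$ odd and $n/2\not\equiv 0\pmod 8$, i.e. $n\equiv 8\pmod{16}$; finally the edge-preservation condition collapses (as shown in the displayed computation above) to $a_0(a-1)+b+1\equiv 0$, which since $a_0(a-1)+b+1 = 4(a_0^2+a_0+1)+n/2$ and $a_0^2+a_0+1$ is odd with $8\mid n$ is equivalent to $8(a_0^2+a_0+1)\equiv 0$ in $\mathbb{Z}_n$. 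The inequality $a<n/4-1$ is just the feasibility inequality $b<n-a-2$ rewritten.

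For sufficiency I would go the other way: assume $n\equiv 8\pmod{16}$, $a=4a_0+1<n/4-1$ with $a_0$ odd and $8(a_0^2+a_0+1)\equiv 0\pmod n$, and set $b=n/2+a+2$. First one checks these parameters are feasible and satisfy Theorem~\ref{vt} (the remark in the text says this is easy to verify; the congruences of Theorem~\ref{vt} with $b-a=n/2+2$ reduce to arithmetic that $8\mid n$ and the hypothesis on $a_0$ handle). Then I would \emph{define} $\alpha$ by the two explicit case-tables given above (on $u_{4i+r}$ and $v_{4i+r}$, $0\le r<4$), verify it is a bijection using $\gcd(n,a-1)=4$ and $\gcd(n,b-a)=2$, and check it respects all edge colours: the blue-and-green edges among the $u_i$ and all red edges are handled automatically by the construction, so the only thing to verify is that the blue and green edges among the $v_i$ — representatives $[v_{4i},v_{4i+a}]$, $[v_{4i},v_{4i+b}]$, $[v_{4i+2},v_{4i+2+a}]$, $[v_{4i+2},v_{4i+2+b}]$ — map to red and blue edges respectively. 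Tracking $\alpha$ on these (again as in the displayed computation) reduces every required identity to $a_0(a-1)+b+1\equiv 0\pmod n$, which by the same rewriting $a_0(a-1)+b+1=4(a_0^2+a_0+1)+n/2$ is exactly our hypothesis $8(a_0^2+a_0+1)\equiv 0$. Hence $\alpha\in\mathrm{Aut}(X)$ and by construction it fixes $u_0$ and cyclically permutes $\mathcal{R},\mathcal{B},\mathcal{G}$.

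The main obstacle is the sufficiency bookkeeping: one must be careful that the four edge-types among the $v_i$ really do give only a \emph{single} congruence after simplification, i.e. that the apparently different conditions $a_0(a-1)+b+1\equiv 0$, $(n/8+a_0)(a-1)+a+3\equiv 0$, $(n/8+a_0+1)(a-1)+b\equiv 2a-b$, etc., are all equivalent. This uses the relations $b=n/2+a+2$ and $2(b-a)\equiv 4\pmod n$ in a slightly fiddly way, and it is where an off-by-a-multiple-of-$n/\gcd$ slip is easiest to make. I would therefore do this reduction once, cleanly, for one representative of each of the two residue classes of $v$-vertices (subscript $\equiv 0$ and $\equiv 2\pmod 4$), and then invoke the translation-invariance coming from $\langle\rho^2\rangle\le\mathrm{Aut}(X)$ to cover the rest, rather than grinding through all indices. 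A secondary, minor point to get right is the direction $[v_0,v_a]\in\mathcal{B}$ versus $[v_0,v_a]\in\mathcal{G}$: the proposition is stated only for the former, and one should note explicitly that this assumption is what selects the congruence $a+(a-1)(a-b)/2\equiv 1$ (rather than the $b$-version) from Theorem~\ref{vt}, so that the whole computation is consistent.
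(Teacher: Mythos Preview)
Your proposal is correct and follows essentially the same route as the paper's argument: deduce $\gcd(n,a-1)=4=\gcd(n,b+1)$ from the equal-length cycles, combine with Theorem~\ref{vt} and $[v_0,v_a]\in\mathcal{B}$ to force $b=n/2+a+2$, $8\mid n$, then $n\equiv 8\pmod{16}$ and $a_0$ odd; write down the explicit $\alpha$ on residues mod~$4$ and reduce all edge checks to the single congruence $a_0(a-1)+b+1\equiv 0$, which rewrites as $8(a_0^2+a_0+1)\equiv 0$. Your identification of the main bookkeeping hazard (showing the several edge conditions collapse to one, using $b=n/2+a+2$ and $2(b-a)\equiv 4$) and your remark on why $[v_0,v_a]\in\mathcal{B}$ selects the $a$-congruence from Theorem~\ref{vt} are exactly the points the paper handles, so there is nothing to add.
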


\smallskip

The other case is $[v_0,v_a]\in\mathcal{G}$.  We leave it to the reader to verify that the preceding
argument with $b$ replacing $a$ leads to the following analogue of Proposition \ref{at1}.

\begin{prop}\label{at2} Let $X=\mathcal{M}_O(n,a,b)$ be feasible and satisfy $a>1$.
If the edge $[v_0,v_a]$
is in $\mathcal{G}$, then $X$ admits an automorphism fixing $u_0$ and cyclically permuting
the sets $\mathcal{R},\mathcal{B}\mbox{ and }\mathcal{G}$ if and only if $n\equiv 8(\mbox{\rm{mod} }16),
b=4b_0+1$ with $n/2<b<(3n-4)/4$, where $b_0$ is odd and satisfies $8(b_0^2+b_0+1)=0$ in $\mathbb{Z}_n$,
and $a=b-n/2+2$.
\end{prop}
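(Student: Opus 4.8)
The plan is to mirror the proof of Proposition \ref{at1} with the roles of $a$ and $b$ interchanged, exploiting the structural symmetry that swapping $\mathcal{B}$ and $\mathcal{G}$ corresponds to swapping the two jumps. Concretely, suppose $\beta \in \mathrm{Aut}(X)$ fixes $u_0$ and cyclically permutes the colour classes with $\mathcal{G} \mapsto \mathcal{R} \mapsto \mathcal{B} \mapsto \mathcal{G}$ (the choice of direction dual to the one in Proposition \ref{at1}). As in the previous case, $\beta$ maps the blue-green cycle through $u_0$ to the red-green cycle, then to the red-blue cycle, so all three monochromatic-pair cycles have length $n$. With $[v_0,v_a] \in \mathcal{G}$, the red-green cycle through $u_0$ now uses the $a$-jump on the $V$-side in the pattern $[\ldots,u_{-1},u_0,v_0,v_a,u_a,u_{a+1},v_{a+1},v_{2a+1},u_{2a+1},\ldots]$ and the red-blue cycle uses the $b$-jump, $[\ldots,u_1,u_0,v_0,v_b,u_b,u_{b-1},v_{b-1},v_{2b-1},u_{2b-1},\ldots]$. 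First I would record that these two cycles have length $n$ iff $4\mid n$ and $\gcd(n,b-1)=4=\gcd(n,a+1)$, which is exactly the previous condition with $a \leftrightarrow b$ (and sign flips matching the geometry).

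Next I would feed these divisibility constraints into Theorem \ref{vt}. The condition $[v_0,v_a]\in\mathcal{G}$ together with Theorem \ref{vt} forces the analogue of $(b-1)(b-a-2)/2 \equiv 0 \pmod n$; combined with $\gcd(n,b-1)=4$ this gives $2(b-a-2)=0$ in $\mathbb{Z}_n$, hence $a = b - n/2 - 2$ — wait, sign-checking against feasibility one gets $a = b - n/2 + 2$ as stated (the case $a=b-2$ being excluded). Then $\gcd(n,b-a)=2$ forces $8\mid n$. Writing $b = 4b_0+1$, the condition $\gcd(n,b-1)=\gcd(n,4b_0)=4$ gives $\gcd(n,b_0)=1$, so $b_0$ is odd, and then examining $\gcd(n,a+1)=\gcd(n, n/2 + 4(b_0+1) - \text{stuff})$ in the same way as before pins down $n \equiv 8 \pmod{16}$ and the range $n/2 < b < (3n-4)/4$ (equivalently $1 < a < n/4 - 1$, i.e. feasibility).

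For the converse and the existence-of-$\beta$ analysis, I would define $\beta$ explicitly on the four residue classes mod $4$ (the exact analogue of the two displayed piecewise formulas for $\alpha$, with $a$ and $b$ swapped and the cycle-orientation reversed), check that $\gcd(n,b-1)=4$ and $\gcd(n,b-a)=2$ make it a bijection, observe it automatically sends blue/green edges among the $u_i$ correctly, and reduce the whole question to whether $\beta$ sends the blue and green edges among the $v_i$ to red and blue respectively. This reduces, just as before, to the edges $[v_{4i},v_{4i+a}], [v_{4i},v_{4i+b}], [v_{4i+2},v_{4i+2+a}], [v_{4i+2},v_{4i+2+b}]$, and a short computation using $a = b - n/2 + 2 = 4(b_0 - n/8 + 1) - 1$ collapses all four conditions (using $2(b-a)=4$ in $\mathbb{Z}_n$) to the single congruence $b_0(b-1) + a + 1 = 0$ in $\mathbb{Z}_n$. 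Finally, since $b_0(b-1)+a+1 = 4(b_0^2 + b_0 + 1) + (\text{a multiple of } n/2)$ with $b_0^2+b_0+1$ odd and $8\mid n$, this is equivalent to $8(b_0^2+b_0+1) = 0$ in $\mathbb{Z}_n$, which in turn subsumes $\gcd(n,b_0)=1$ and $\gcd(n,a+1)=4$. The main obstacle is purely bookkeeping: getting every index shift and sign right when reversing the orientation of the colour-permutation, since the substitution $a\leftrightarrow b$ alone is not quite the correct dictionary — one must also track that reversing the $3$-cycle on $\{\mathcal{R},\mathcal{B},\mathcal{G}\}$ reverses the sense in which the monochromatic cycles are traversed, so the "$+$" jumps become "$-$" jumps in several places. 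Once the formula for $\beta$ is written down correctly, the remaining verifications are identical in form to those in the proof of Proposition \ref{at1}, so I would state them as "by the analogous computation" and leave the details to the reader, exactly as the excerpt already signals.
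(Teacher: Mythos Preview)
Your proposal is correct and follows essentially the same approach as the paper, which simply instructs the reader to rerun the argument of Proposition~\ref{at1} with $b$ in place of $a$; your choice to reverse the direction of the cyclic permutation on $\{\mathcal{R},\mathcal{B},\mathcal{G}\}$ is immaterial since the existence of such an $\alpha$ is equivalent to the existence of $\alpha^{2}$. The minor sign wobble you flag (``$(b-1)(b-a-2)/2$'' should be $(b-1)(b-a+2)/2$) you already self-correct, and your reduction to $8(b_0^2+b_0+1)=0$ in $\mathbb{Z}_n$ is exactly the intended one.
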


The graphs described in the two preceding propositions form the next entry for the family
$\mathcal{C}$ in Section 2.  We conclude this section with a comment about $\mathcal{M}_O(16,3,9)$.
As mentioned earlier, this graph is arc-transitive but it does not satisfy either Proposition \ref{at1}
or Proposition \ref{at2} as $n$ is divisble by 16. However, there is nothing wrong with this as this
graph in fact does not admit an automorphism cyclically permuting the sets $\mathcal{R},
\mathcal{B}\mbox{ and }\mathcal{G}$.  By the results in \cite{F1,F2} this is simply the only
arc-transitive $\mathcal{M}_O(n, a, b)$ graph which is not 1-regular and is neither a
generalized Petersen graph nor a graph of the form $\mathrm{HTG}(2,n,\ell)$. 

\section{Non-Trivial Stabilizer}

The preceding section takes care of the arc-transitive case, that is, case (iv) of Lemma \ref{orb}.
Consider case (i) of Lemma \ref{orb}, that is, when $\mathcal{R},\mathcal{B}\mbox{ and }
\mathcal{G}$ are distinct orbits of the action of $\mathrm{Aut}(X)$ on the edges of $X$.
This implies that the stabilizer of any vertex of $X$ must fix its three neighbors.  From this
it easily follows that the stabilizer of any vertex is the identity and
$X$ is $\mathcal{F}(1,2)$-invariant.

From the preceding paragraph, we see that if there are further non-$\mathcal{F}(1,2)$-invariant
examples, they must arise from cases (ii) and (iii) of Lemma \ref{orb}.  We consider case (ii) in
detail and leave case (iii) to the reader with a few hints thrown in.

We assume that $\mathrm{Aut}(X)$ has two orbits acting on the edges of $X$, namely,
$\mathcal{R}\cup\mathcal{B}$ and $\mathcal{G}$.  The next lemma shows that 8-cycles are
important.

\begin{lemma}\label{8c} Let $X=\mathcal{M}_O(n,a,b)$ be feasible and $a>1$.
If $\mathcal{B}\cup\mathcal{R}$ is an orbit of $\mathrm{Aut}(X)$ acting on the edges and
the cycles comprising the subgraph $\mathcal{B}\cup\mathcal{R}$ are not 8-cycles, then
the stabilizer of any vertex either is the identity or has order 2.
\end{lemma}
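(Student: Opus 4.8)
The plan is to work inside the stabilizer $H = \mathrm{Aut}(X)_{u}$ of a vertex $u$ and use the colour structure together with Lemma~\ref{orb}(ii). Since we are in case (ii), $\mathcal{B}\cup\mathcal{R}$ is a single edge-orbit and $\mathcal{G}$ is another. The green edges form a perfect matching, so every automorphism permutes the green edges; thus $H$ fixes the green edge at $u$, hence fixes the green-neighbour $u'$ of $u$ as well (the other end of that green edge cannot be $u$ itself). Consequently $H$ fixes the unordered pair $\{u,u'\}$ and fixes the set of the four remaining edges incident to $\{u,u'\}$ (two blue/red edges at $u$ and two at $u'$), all of which lie in $\mathcal{B}\cup\mathcal{R}$. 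The first step is therefore to understand the local action of $H$ on these four edges: $H$ either fixes all of them or swaps the two at $u$ while swapping the two at $u'$ (and possibly swaps $u$ with $u'$). This already bounds the ``local degree'' of freedom, and I would record that the subgroup $H_0 \le H$ fixing $u$, $u'$ and all four incident $\mathcal{B}\cup\mathcal{R}$-edges pointwise has index dividing $4$ in $H$; the real content is to show that, away from the 8-cycle case, in fact $H_0 = H$ forces the identity and the non-trivial possibilities collapse to an index-$2$ situation.

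The second step is to propagate the fixing along the cycles of the subgraph $\mathcal{B}\cup\mathcal{R}$. These cycles are $2$-regular (each vertex meets exactly one blue and one red edge), so $\mathcal{B}\cup\mathcal{R}$ is a disjoint union of cycles, each alternating blue/red, hence of even length. An automorphism fixing a vertex $w$ of such a cycle $Z$ and fixing both edges of $Z$ at $w$ must fix the two neighbours of $w$ on $Z$, and then inductively fix all of $Z$ pointwise (this is the standard ``rigidity along a $2$-valent alternating cycle'' argument, exactly parallel to the anchor-chain argument in Lemma~\ref{imp}). So an element of $H$ that fixes a vertex of $Z$ together with its two $Z$-edges fixes $Z$ pointwise. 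The point where the length of the cycles enters is the interaction between this blue-red cycle $Z$ and the green matching: if $Z$ has length $2m$, the green edges glue $Z$ to other cycles, and fixing $Z$ pointwise forces fixing of the green-partners of all vertices of $Z$, hence of whole further cycles, and so on until connectivity forces everything to be fixed — \emph{unless} the local configuration admits a colour-swapping symmetry, and that is precisely what an $8$-cycle ($m=4$) permits and a cycle of any other length does not.

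The third step is the case analysis of the possible non-trivial elements of $H$. An element $h \in H$ not fixing $Z$ pointwise must (after the reductions above) swap the blue and red edge at $u$, hence act on $Z$ as a reflection; but $h$ also permutes the green matching, so the reflection of $Z$ must be compatible with the way green edges attach to $Z$. Tracking a short walk $u \to$ (blue) $\to \cdots$ that returns to $Z$ via green edges, one gets a length constraint on $Z$; the upshot is that if $Z$ is not an $8$-cycle then any such $h$ has order $2$ and is completely determined by its action on the two edges at $u$ (there is at most one non-trivial choice), so $|H| \le 2$. I would phrase this by showing $H/H_0$ embeds in a group of order $2$ once the $8$-cycle option is excluded, and $H_0 = 1$ by the rigidity of step two combined with connectivity of $X$. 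Assembling: $H$ is either trivial or of order $2$, which is the assertion.

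\textbf{Main obstacle.} The delicate point is the third step — pinning down exactly which cycle lengths of $\mathcal{B}\cup\mathcal{R}$ permit a colour-swapping automorphism, and verifying that the only exception is length $8$. This requires carefully writing out a short closed walk alternating blue/red/green and computing the displacement in $\mathbb{Z}_n$ using the parameters $a,b$ and the congruences from Theorem~\ref{vt}; the bookkeeping (parities of subscripts, the even/odd jump distinction intrinsic to $\mathcal{M}_O$) is where errors creep in. The other steps are routine ``rigidity along $2$-valent cycles'' arguments of the same flavour as Lemma~\ref{imp}.
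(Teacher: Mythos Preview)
Your overall strategy---rigidity along the blue--red cycles, then propagation via the green matching---is the same as the paper's, but you have made the argument harder than necessary and, as a result, misplaced the role of the 8-cycle hypothesis.

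The paper does not introduce your $H_0$ (the pointwise stabilizer of $u$, $u'$, and the four blue--red edges at $\{u,u'\}$). It works instead with the subgroup $H_1\le H$ fixing $u_0$ and its three neighbours. Since $\mathcal{G}$ is an orbit on its own, the green neighbour $u_{-1}$ is always fixed by $H$, so $H/H_1$ embeds in $S_2$ acting on the two remaining neighbours, and $|H/H_1|\le 2$ holds \emph{regardless} of the cycle length. All the content of the lemma is therefore the single claim that $H_1=1$ when the blue--red cycles are not 8-cycles. Your step~3---analysing reflections of $Z$ and their compatibility with the green attachments---is aimed at bounding $H/H_0$, which is not where the 8-cycle obstruction lives.

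The actual argument for $H_1=1$ is shorter and more combinatorial than what you anticipate (no displacements in $\mathbb{Z}_n$, no congruences from Theorem~\ref{vt}). First, the blue--red cycles have length divisible by $4$, not merely even: the cycle through $u_0$ has the period-four pattern $u,v,v,u$, and exactly every fourth vertex (those of type $u_{\text{even}}$) has its green edge going to the adjacent blue--red cycle through $u_{-1}$. Girth $>4$ rules out length $4$. If $h\in H_1$, then $h$ fixes the cycle $Z$ through $u_0$ pointwise (your step~2), hence fixes the green partners of all vertices of $Z$; among these are $\mathrm{length}(Z)/4$ vertices of the adjacent cycle $Z'$. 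When $\mathrm{length}(Z)\ge 12$ this gives at least three fixed vertices on $Z'$, which excludes any non-trivial automorphism of $Z'$ (a reflection fixes at most two), so $Z'$ is fixed pointwise, and connectivity finishes. When $\mathrm{length}(Z)=8$ only two antipodal vertices of $Z'$ are forced, and a reflection of $Z'$ survives---that is the entire source of the exception.

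So the gap in your proposal is in step~2, not step~3: the sentence ``hence of whole further cycles'' is exactly the step that requires justification, and the justification is the $\ge 3$ fixed-point count above, which in turn needs the observation that the cycle length is a multiple of $4$ (you only recorded ``even'').
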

\begin{proof} For simplicity we refer to the cycles comprising $\mathcal{B}\cup\mathcal{R}$ as
blue-red cycles.  It is obvious that if there is a single blue-red cycle, then the stabilizer of a
vertex has order at most 2 and the conclusion follows.  So we assume there are at least two
blue-red cycles.

We consider distinct blue-red cycles to be {\it adjacent} if there is at least one green edge
joining vertices of the two cycles.  Consider two adjacent blue-red cycles, say \[[\ldots,v_{2-x},
u_{2-x},u_{1-x},v_{1-x},v_1,u_1,u_0,v_0,v_x,u_x,u_{x-1},v_{x-1}\ldots]\] and
\[[\ldots,v_{-x},u_{-x},u_{-x-1},v_{-x-1},v_{-1},u_{-1},u_{-2},v_{-2},v_{x-2},u_{x-2},
u_{x-3},v_{x-3},\ldots],\] where $x$ is one of $a$ or $b$ depending on which of the
conditions from Theorem \ref{vt} holds.  

Note that every fourth vertex of the first cycle,
starting at $u_0$, is joined by a green edge to a vertex of the second cycle.  Hence, the
length of the blue-red cycle is a multiple of 4.  It cannot be 4 because the girth of $X$ is
strictly greater than 4.  If the length is 12 or more, then any automorphism fixing $u_0$
and its three neighbors, must fix all the vertices of the two blue-red cycles.  It then
must be the identity on all of $X$ because $X$ is connected.  The result follows.    \end{proof}

\medskip

Lemma \ref{8c} establishes two subcases: 1) when the stabilizer of $u_0$ contains a 
non-identity automorphism that fixes its three
neighbors and the length of the blue-red cycles is 8; and 2) when the stabilizer of $u_0$
has order 2 and its unique non-identity element interchanges $v_0$ and $u_1$.  

\medskip

We consider subcase 1) first and this produces the second convoluted situation.  Namely, we
establish the values the parameters must have if the assumptions of this subcase hold.  We then
show that, in fact, there is a non-identity automorphism in the stabilizer, although it does not fix
$u_0$ and its three neighbors, and the blue-red cycles have length 8.  This still establishes
that the corresponding $\mathcal{M}_O(n,a,b)$ is not $\mathcal{F}(1,2)$-invariant.
 
If $[v_0,v_a]$ is blue, then the blue-red cycle containing $u_0$ is \[[u_0,v_0,v_a,u_a,u_{a-1},
v_{a-1},v_{2a-1},u_{2a-1},u_0]\] which implies that $a=(n+2)/2$.    However, this contradicts
$1<a<b-2<n-a-2$ so that $[v_0,v_b]$ must be blue.  Looking at the same 8-cycle with $b$
replacing $a$, we obtain $b=(n+2)/2$ which implies $n$ is divisible by 4 because $b$ is odd.  

Let $\eta$ be a non-identity element of $\mathrm{Aut}(X)$ fixing $u_0$ and its three neighbors.
If $\eta$ also fixes $u_i$ and its three neighbors for every $u_i$, $i$ even, then $\eta$
would be the identity.  Hence, by relabelling the vertices if necessary, we may assume that
$\eta$ fixes $u_{n-1},u_0,v_0,u_1\mbox{ and }u_2$, and interchanges $v_2$ and $u_3$.
Let $g\in G$ be the automorphism switching $u_1$ and $u_2$.  Define $\theta=\eta g\eta g$. 
It is easy to verify that $\theta$ fixes $u_1$ and $u_2$ and contains
the two transpositions $(u_0\;v_1)$ and $(u_3\;v_2)$ when written as a product of disjoint 
cycles.

Consider the 10-cycle \[[u_1,u_2,v_2,v_{2+a},v_{2+a-b},u_{2+a-b},u_{1+a-b},v_{1+a-b},
v_{1-b},v_1,u_1].\]  The action of $\theta$ on this 10-cycle provides useful information.  It
fixes $u_1$ and $u_2$,  and switches $v_2$ and $u_3$.  Because $[v_2,v_{2+a}]$ is a green
edge, it must switch $v_{2+a}$ and the green neighbor of $u_3$ which is $u_4$.  Thus,
$v_{2+a-b}$ is mapped to either $v_4$ or $u_5$.  If $v_{2+a-b}$ is mapped to $v_4$,
then $u_{2+a-b}$ is mapped to $v_{4+b}$.  This implies $u_{1+a-b}$ is mapped to
$v_{4+b-a}$ because $[u_{2+a-b},u_{1+a-b}]$ is in $\mathcal{G}$. 

Looking at the cycle from the other end, we have $\theta(v_1)=u_0$ which implies that
$\theta(v_{1-b})=v_0$.  Because $[v_{1+a-b},v_{1-b}]\in\mathcal{G}$, we have
$\theta(v_{1+a-b})=v_a$.  Hence, the edge $[v_{4+b-a},v_a]$ is either blue or red.
As there are no red edges between vertices of $V$, it is blue so that $4+2b-2a=0$ which
implies $a=3$.  However, $\eta$ fixes $v_0$ and
switches $u_3$ and $v_2$.  When $a=3$, $[v_0,v_3]\in\mathcal{G}$ so that $\eta$
must fix $v_3$ as well, but this is impossible as $v_2$ and $v_3$ are not adjacent.  

Therefore, we conclude that $\theta(v_{2+a-b})=u_5$ so that $\theta(u_{2+a-b})=
v_5$ and $\theta(u_{1+a-b})=v_{5-a}$.  This implies $5-a+b\equiv a(\mbox{mod }n)$
which is equivalent to $2a\equiv n/2+6(\mbox{mod }n)$.  Because $a<n/2$ and is
odd, we have that $8|n$ and $a=n/4+3$. 

As $b-a=n/4-2$, the conditions $\mathrm{gcd}(b-a,n)=2$ and $(b-a)^2/2\equiv 2(\mbox{mod }n)$
imply $n\equiv 16(\mbox{mod }32)$.  If we choose $n,a\mbox{ and }b$ as just described, it
is straightforward to check that they satisfy Theorem \ref{vt} and (1).  Hence, what is left
to do is to determine exactly for which $n$ there is an automorphism in the stabilizer of $u_0$
fixing all three of of its neighbors.  We work with $\theta$. 

Denote the blue-red 8-cycle containing the edge $[u_i,u_{i+1}]$, $i$ even and $0\leq i<n/2$, by $B_i$.
Note that $\theta$ reflects $B_0$ and $B_2$ at the respective vertices $u_1$ and $u_2$.
Because the cycles have length 8, the vertices $u_{n/2+1}$ and $u_{n/2+2}$ also are fixed.

We saw above that $\theta$ switches $u_4$ and $v_{2+a}=v_{n/4+5}$ so that it switches the
8-cycles $B_4$ and $B_{1+a}$.  We also saw that $\theta(v_{3n/4+4})=u_5$, where
$v_{3n/4+4}\in B_{1+a}$ and $u_5\in B_4$.  Then $\theta$ maps $u_5$ to either 
$v_{3n/4+4}$ or $u_{2+a}=u_{n/4+5}$.  We know that $\theta$ switches $v_0$ and $v_{1-b}$
which implies it switches the green edges incident with them.  Thus, $\theta$
switches $v_a=v_{n/4+3}$ and $v_{1+a-b}=v_{3n/4+3}$.  These two vertices are antipodal
on $B_{n/4+2}$ which implies $\theta$ fixes $B_{n/4+2}$ setwise.  We then know that
$\theta$ reflects $B_{n/4+2}$ with respect to the two vertices $u_{n/4+2}$ and $u_{3n/4+2}$.
This implies that $\theta$ interchanges $v_{3n/4+2}$ and $u_{3n/4+3}$.  Then the green
edges incident with the two vertices are switched and they are $u_{3n/4+4}$ and $v_5$.  We
conclude that $\theta(v_{3n/4+4})=u_5$, that is, $\theta$ switches these
two vertices.

Then the respective green edges incident with $u_5$ and $v_{3n/4+4}$ are switched.
Thus, $u_6$ and $v_7$ are switched.  As $u_6$ and  $v_7$ belong to the same
8-cycle $B_6$, $\theta$ reflects $B_6$ and we see that $\theta$ fixes $u_7$. 

The green edge incident with $u_7$ is $[u_7,u_8]$ so that $\theta$ fixes $u_8$.  
We then have that $\theta$ fixes $B_8$ setwise and we want to
show that $\theta$ switches $v_8$ and $u_9$, that is, $\theta$ reflects $B_8$ with
respect to $u_8$.  If $\theta$ does not reflect $B_8$, then $\theta$ fixes every
vertex of $B_8$.  In particular, $v_{n/2+9}$ is fixed.  The green edge incident with
$v_{n/2+9}$ is $[v_{n/4+6},v_{n/2+9}]$ so that $v_{n/4+6}$ also is fixed.  
However, as $\theta$ maps $v_{a+2}=v_{n/4+5}$ to $u_4$ and $v_{a+2-b}=v_{3n/4+4}$
to $u_5$, it maps $u_{n/4+5}$ to $v_4$.  Hence, $\theta$ maps $u_{n/4+6}$ to
$v_{n/4+7}$ because $[u_{n/4+5},u_{n/4+6}]$ is green.  This is a contradiction because
$v_{n/4+6}$ and $v_{n/4+7}$ are not adjacent.  This means that $\theta$ switches $v_8$
and $u_9$.

We have shown that $u_8$ is fixed and $u_9$ and $v_8$ must switch.  Thus, we have
shown that starting with the conditions for $u_1$ and $u_2$ (that $\theta$ fixes both and
switches the two neighbors of each), the conditions are replicated
for $u_7$ and $u_8$.  Hence, $n$ is divisible by 6 and $\theta$ is completely
determined by the assumption of its action on $u_1$ and $u_2$ and their respective
neighbors.  In particular, for each $0\leq i<n/6$ and each $0\leq r<6$ we have that

\[\theta(u_{6i+r})=\left\{\begin{array}{ll}
v_{6i+r+1} & \mbox{when $r=0$}\\u_{6i+r} & \mbox{when $r=1,2$}\\
v_{6i+r-1} & \mbox{when $r=3$}\\v_{6i+r+\frac{n}{4}+1} & \mbox{when $r=4$}\\
v_{6i+r-\frac{n}{4}-1} & \mbox{when $r=5$}
\end{array} \right.\]  and

\[\theta(v_{6i+r})=\left\{\begin{array}{ll}
v_{6i+r+\frac{n}{2}} & \mbox{when $r=0,3$}\\u_{6i+r-1} & \mbox{when $r=1$}\\
u_{6i+r+1} & \mbox{when $r=2$}\\u_{6i+r+\frac{n}{4}+1} & \mbox{when $r=4$}\\
u_{6i+r-\frac{n}{4}-1} & \mbox{when $r=5$.}
\end{array} \right.\]

It can be verified that under the assumptions $n\equiv 16(\mbox{mod }32)$, $3|n$,
$a=n/4+3$ and $b=n/2+1$, the above mapping is an automorphism of $\mathcal{M}_O(n,a,b)$
which proves the following result.

\begin{prop} If $X=\mathcal{M}_O(n,a,b)$ is feasible, $a>1$, both $\mathcal{B}\cup
\mathcal{R}$ and $\mathcal{G}$ are orbits of $\mathrm{Aut}(X)$ acting on the edges,
and there is a non-identity automorphism of $X$ fixing $u_0$ and its three neighbors,
then $n\equiv 48(\mbox{\rm mod }96)$, $a=n/4+3$ and $b=n/2+1$.  Moreover, if
the parameters for $X$ are the preceding, then $X$ is not $\mathcal{F}(1,2)$-invariant.
\end{prop}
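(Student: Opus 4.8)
The plan is to establish two separate claims: first, the necessity of the parameter conditions $n\equiv 48\ (\mathrm{mod}\ 96)$, $a=n/4+3$, $b=n/2+1$; and second, that under exactly these conditions $X$ fails to be $\mathcal{F}(1,2)$-invariant. For the first claim I would suppose we are in case (ii) of Lemma \ref{orb} (so that $\mathcal{B}\cup\mathcal{R}$ and $\mathcal{G}$ are the edge orbits) and that the stabilizer of $u_0$ contains a non-identity element $\eta$ fixing $u_0$ together with its three neighbors; in particular the blue-red cycles cannot have length $12$ or more by Lemma \ref{8c}, and they cannot have length $4$ since the girth of a feasible graph exceeds $4$, so they have length $8$. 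I would then trace the blue-red $8$-cycle through $u_0$ to force $b=(n+2)/2$ (the parallel computation with $a$ is excluded by feasibility $1<a<b-2<n-a-2$), so that $4\mid n$. Relabelling if necessary, assume $\eta$ fixes $u_{n-1},u_0,v_0,u_1,u_2$ and interchanges $v_2$ and $u_3$, and introduce the auxiliary automorphism $\theta=\eta g\eta g$ where $g\in G$ transposes $u_1$ and $u_2$; one checks $\theta$ fixes $u_1,u_2$ and carries the transpositions $(u_0\ v_1)$ and $(u_3\ v_2)$.

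The heart of the necessity argument is to push $\theta$ around the $10$-cycle
\[[u_1,u_2,v_2,v_{2+a},v_{2+a-b},u_{2+a-b},u_{1+a-b},v_{1+a-b},v_{1-b},v_1,u_1].\]
Working from the $u_1,u_2$ end and the $v_1$ end simultaneously, and repeatedly using that green edges go to green edges, I would show that the branch $\theta(v_{2+a-b})=v_4$ leads to $a=3$, which is incompatible with $\eta$ fixing $v_0$ (since then $[v_0,v_3]\in\mathcal{G}$ forces $\eta$ to fix $v_3$, impossible as $v_2,v_3$ are non-adjacent). Hence $\theta(v_{2+a-b})=u_5$, which yields $5-a+b\equiv a\ (\mathrm{mod}\ n)$, i.e. $2a\equiv n/2+6\ (\mathrm{mod}\ n)$; with $a$ odd and $a<n/2$ this gives $8\mid n$ and $a=n/4+3$, whence $b=n/2+1$. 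Combining with $\gcd(b-a,n)=2$ and $(b-a)^2/2\equiv 2\ (\mathrm{mod}\ n)$ forces $n\equiv 16\ (\mathrm{mod}\ 32)$. I expect this web of subscript bookkeeping — keeping track of which vertex of which blue-red $8$-cycle $\theta$ sends where — to be the main obstacle; it is entirely mechanical but error-prone, so I would organize it by naming the blue-red $8$-cycles $B_i$ and recording $\theta$'s action on representative vertices of each.

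For the sufficiency direction of the first claim — producing the automorphism when the parameters are right — I would track $\theta$'s action cycle by cycle: $\theta$ reflects $B_0$ and $B_2$ at $u_1$ and $u_2$, swaps $B_4$ with $B_{1+a}$, fixes $B_{n/4+2}$ setwise (reflecting it at $u_{n/4+2}$ and $u_{3n/4+2}$), and by chasing green edges one deduces that the conditions imposed at $(u_1,u_2)$ are replicated at $(u_7,u_8)$; this periodicity of $6$ forces $3\mid n$, hence $n\equiv 48\ (\mathrm{mod}\ 96)$, and simultaneously pins down $\theta$ completely by the formulas displayed above. It then remains to verify directly that those formulas define an automorphism of $\mathcal{M}_O(n,n/4+3,n/2+1)$ — a finite check that the map is a bijection (using $\gcd(b-a,n)=2$) sending blue, red, and green edges to the correct colors — which I would present as a routine verification. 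Finally, the last sentence of the proposition follows at once: the existence of any non-identity automorphism fixing $u_0$ means, by Lemma \ref{reg}, that $|\mathrm{Aut}(X)|>2n$, and since the edge partition into $\mathcal{R},\mathcal{B},\mathcal{G}$ is the $G$-orbit partition while $\mathrm{Aut}(X)$ now fuses $\mathcal{B}$ and $\mathcal{R}$ and moves edges within $\mathcal{B}\cup\mathcal{R}$ across the $1$-factor, no $\mathrm{Aut}(X)$-invariant $(1,2)$-factorization with the prescribed $2$-factor can exist; thus $X$ is not $\mathcal{F}(1,2)$-invariant.
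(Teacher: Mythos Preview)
Your proposal is correct and follows essentially the same route as the paper: deduce blue-red cycle length $8$ via Lemma~\ref{8c}, force $b=(n+2)/2$, build $\theta=\eta g\eta g$ with $g$ swapping $u_1,u_2$, chase $\theta$ around the same $10$-cycle to get $a=n/4+3$ (ruling out the $a=3$ branch), use the feasibility congruences for $n\equiv 16\ (\mathrm{mod}\ 32)$, then track $\theta$ on the $8$-cycles $B_i$ to obtain the period-$6$ replication and hence $3\mid n$, and finally write down $\theta$ explicitly and verify it. One small labeling slip: what you call ``the sufficiency direction of the first claim'' is still part of the necessity argument (you are still assuming $\eta$ exists and extracting the further constraint $3\mid n$); the genuine converse step is only the direct verification that the displayed formulas give an automorphism, which then yields the ``moreover'' via Lemma~\ref{reg} exactly as you say.
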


\medskip

We now consider the other subcase, that is, the stabilizer of $u_0$ does not have an element fixing
all the neighbors.  Note that this means that the stabilizer of $u_0$ has order 2 because if there
were two distinct automorphisms fixing $u_0$ and switching $u_1$ and $v_0$, then their
product would be a non-identity element fixing $u_0$ and all three neighbors.

Let $\sigma$ be the automorphism fixing $u_0$ and switching $u_1$ and $v_0$.  There are
two possibilities:  Either $[v_0,v_a]$ is blue or $[v_0,v_b]$ is blue.  Consider the former first. 

We know that $(a-1)(a-b+2)/2\equiv 0(\mbox{mod }n)$ from Theorem \ref{vt}.
The red edges of the blue-red cycle containing $[u_0,u_1]$ are swapped with the blue
edges of the cycle by $\sigma$.  In fact, $\sigma$ swaps the edges of $\mathcal{B}$ and
$\mathcal{R}$ as otherwise there is a non-identity automorphism fixing a vertex and each
of its neighbors.  
This implies it swaps the blue-green cycle of length $n$ containing $[u_0,u_1]$ with
the red-green cycle containing $[u_0,v_0]$.  The edges $[v_0,v_b]$ and $[u_b,u_{b+1}]$
are green in the latter cycle.  So the cycle contains \[[\ldots,u_{n-1},u_0,v_0,v_b,u_b,u_{b+1},
v_{b+1},v_{2b+1},u_{2b+1},\ldots].\]  Thus, its length is $4n/\mathrm{gcd}(b+1,n)=n$
which implies that $\mathrm{gcd}(b+1,n)=4$ and $4|n$.

Write the subscripts of the the vertices in the form $4i+r$ for $0\leq i<n/4$ and $0\leq r\leq 3$.
We then have the following:

\[\sigma(u_{4i+r})=\left\{\begin{array}{ll}
u_{i(b+1)} & \mbox{when $r=0$}\\v_{i(b+1)} & \mbox{when $r=1$}\\
v_{(i+1)(b+1)-1} & \mbox{when $r=2$}\\u_{(i+1)(b+1)-1} & \mbox{when $r=3$}
\end{array} \right.\] 

and

\[\sigma(v_{4i+r})=\left\{\begin{array}{ll}
u_{i(b+1)+1} & \mbox{when $r=0$}\\v_{i(b+1)+a} & \mbox{when $r=1$}\\
v_{(i+1)(b+1)-1-a} & \mbox{when $r=2$}\\u_{(i+1)(b+1)-2} & \mbox{when $r=3$.}
\end{array} \right.\] 

This mapping is bijective because $\mathrm{gcd}(b+1,n)=4$ and $\mathrm{gcd}(b-a,n)=2.$  
We just need to determine conditions so that all the edges between vertices of $V$ are mapped
to edges of $X$.  To this end, write \[a=4a_0+1\mbox{ and }b=4b_0+3.\] From above we have
\[\sigma(v_{4i+a})=v_{(i+a_0)(b+1)+a}, \sigma(v_{4i+b})=u_{(i+b_0+1)(b+1)-2}\mbox{ and }
\sigma(v_{4i})=u_{i(b+1)+1}\] so that $\sigma$ has the required properties with respect to
these corresponding edges if and only if \begin{eqnarray} a_0(b+1)+a-1\equiv 
0(\mbox{mod }n)\mbox{ and }(b_0+1)(b+1)\equiv 4(\mbox{mod }n).\end{eqnarray}

Similarly, $\sigma(v_{4i+2+a})=u_{(i+a_0+1)(b+1)-2},\sigma(v_{4i+2+b})=v_{(i+b_0+1)(b+1)+a}$
and $\sigma(v_{4i+2})=v_{(i+1)(b+1)-1-a}$ so that $\sigma$ has the required properties with
respect to these corresponding edges if and only if \begin{eqnarray} a_0(b+1)+a-1\equiv 0(\mbox{mod }n)
\mbox{ and }(b_0-1)(b+1)+2(a+1)\equiv 0(\mbox{mod }n).\end{eqnarray}

The second conditions of (2) and (3) together imply $2(b-a-2)\equiv 0(\mbox{mod }n)$.  This
holds if and only if $b=n/2+a+2$ because $b>a+2$.  We have $8|n$ because $\mathrm{gcd}
(b-a,n)=2$ and $4|n$.  Consequently, the second condition from (2) implies that $b_0$ is
even.  The condition $b<n-a$ implies $b<(3n+4)/4$.

Conversely, one can verify that, assuming $n$ is divisible by 8, $b=4b_0+3<(3n+4)/4$ with
$b_0>0$ even and $4(b_0+1)^2\equiv 4(\mbox{mod }n)$, and $a=b-n/2-2$,
the conditions of Theorem \ref{vt}, (1), (2), (3) and the assumptions we have made
hold so that the following result holds.

\begin{prop} If $\mathcal{M}_O(n,a,b)$ is feasible, $a>1$ and $[v_0,v_a]\in\mathcal{B}$, then
it admits an involutary automorphism swapping $\mathcal{B}$ and $\mathcal{R}$ if and only if
\begin{itemize}\item $8|n$,
\item $b=4b_0+3<(3n+4)/4$ with $b_0>0$ and even,
\item $4(b_0+1)^2\equiv 4(\mbox{\rm{mod} }n)$ and
\item $a=b-n/2-2$.
\end{itemize}
\end{prop}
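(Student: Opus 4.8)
The plan is to prove both directions of the equivalence by carefully tracking the combinatorial constraints imposed on an involution $\sigma$ that fixes $u_0$, swaps $u_1$ and $v_0$, and interchanges the edge orbits $\mathcal{B}$ and $\mathcal{R}$. The forward direction has essentially already been carried out in the discussion preceding the statement, so I would organize the proof by first recalling that setup and then checking that the listed conditions are both necessary and sufficient. For necessity, I would argue that the existence of such a $\sigma$ together with the hypothesis $[v_0,v_a]\in\mathcal{B}$ forces $\sigma$ to swap the blue-green $n$-cycle through $[u_0,u_1]$ with the red-green cycle through $[u_0,v_0]$; tracing the latter cycle through the green edges $[v_0,v_b]$ and $[u_b,u_{b+1}]$ shows its length is $4n/\gcd(b+1,n)$, and forcing this to equal $n$ gives $\gcd(b+1,n)=4$ and $4\mid n$. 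This pins down $\sigma$ on all of $V(X)$ via the two displayed formulas for $\sigma(u_{4i+r})$ and $\sigma(v_{4i+r})$, using $\gcd(b+1,n)=4$ and $\gcd(b-a,n)=2$ to see the formulas define a bijection.

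The heart of the necessity argument is then purely arithmetic: requiring that $\sigma$ send every edge of $X$ between vertices of $V$ to an edge of $X$. Writing $a=4a_0+1$ and $b=4b_0+3$, it suffices to check the four edge-types $[v_{4i},v_{4i+a}]$, $[v_{4i},v_{4i+b}]$, $[v_{4i+2},v_{4i+2+a}]$, $[v_{4i+2},v_{4i+2+b}]$, which yield the congruence pairs (2) and (3). Combining the second congruences of (2) and (3) gives $2(b-a-2)\equiv 0\pmod n$, hence (since $b>a+2$ and $b-a$ is even with $\gcd(b-a,n)=2$) $b=n/2+a+2$; then $4\mid n$ together with $\gcd(b-a,n)=\gcd(n/2+2,n)=2$ upgrades to $8\mid n$, and the second congruence of (2), namely $(b_0+1)(b+1)\equiv 4\pmod n$, forces $b_0$ even and rewrites as $4(b_0+1)^2\equiv 4\pmod n$ after substituting $b+1=4b_0+4=4(b_0+1)$. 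The inequality $b<n-a$ from feasibility gives $b<(3n+4)/4$, and $b_0>0$ follows from $b>3$. That establishes the listed four conditions.

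For sufficiency, I would reverse the reasoning: assuming $8\mid n$, $b=4b_0+3<(3n+4)/4$ with $b_0>0$ even, $4(b_0+1)^2\equiv 4\pmod n$, and $a=b-n/2-2$, one first checks that $\gcd(b-a,n)=\gcd(n/2+2,n)=2$ (using $8\mid n$), that $(b-a)^2/2=(n/2+2)^2/2\equiv 2\pmod n$, and that the relevant congruence of Theorem \ref{vt} holds, so that $X$ is feasible and inequality (1) is satisfied; one also verifies $\gcd(b+1,n)=\gcd(4(b_0+1),n)=4$, which needs $b_0$ odd—wait, here one uses that $n/2\equiv 4\pmod 8$ so $n/2$ contributes a factor $4$ but not $8$, and $b_0$ even means $b_0+1$ is odd, so $\gcd(4(b_0+1),n)=4\gcd(b_0+1,n)$; the hypothesis $4(b_0+1)^2\equiv 4\pmod n$ forces $\gcd(b_0+1,n)=1$, giving $\gcd(b+1,n)=4$. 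Then the displayed formulas define a permutation $\sigma$ of $V(X)$, it visibly sends blue edges among the $u_i$ to red, green among the $u_i$ to blue, and red to green; and the congruences (2), (3)—which are equivalent to the hypotheses by the computation above—guarantee $\sigma$ sends the four edge-types among the $v_i$ correctly. Hence $\sigma$ is an automorphism swapping $\mathcal{B}$ and $\mathcal{R}$, and it is an involution because $\sigma^2$ fixes $u_0$ and, by Lemma \ref{reg} (as $b<n-a$), the vertex stabilizer has order at most $2$. I expect the main obstacle to be the bookkeeping in verifying that the two pairs of congruences (2) and (3) are genuinely equivalent to the clean conditions in the statement—in particular confirming that the first congruences $a_0(b+1)+a-1\equiv 0\pmod n$ in (2) and (3) are automatically satisfied (or follow from the others) once $b=n/2+a+2$ and $\gcd(b+1,n)=4$ are in place, since otherwise an extra constraint on $a_0$ would need to appear in the statement.
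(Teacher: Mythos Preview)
Your proposal follows the paper's approach essentially line for line: trace the red--green cycle to force $\gcd(b+1,n)=4$, write down the explicit formula for $\sigma$, extract the congruence pairs (2) and (3), and combine them to get $b=n/2+a+2$, $8\mid n$, and $b_0$ even. Two small corrections are needed, though neither affects the strategy.

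First, your appeal to Lemma~\ref{reg} to conclude that $\sigma$ is an involution is illegitimate: that lemma assumes $X$ is $\mathcal{F}(1,2)$-invariant, which is exactly what fails once a nontrivial $\sigma$ exists (and in any case it would give stabilizer order $1$, not $2$). Instead verify $\sigma^2=\mathrm{id}$ directly from the displayed formula, using that $(b+1)^2/4=4(b_0+1)^2\equiv 4\pmod n$. Second, your color bookkeeping slipped: $\sigma$ swaps $\mathcal{B}$ and $\mathcal{R}$ while fixing $\mathcal{G}$ setwise, so green edges go to green (not to blue) and red to blue (not to green).

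The concern you flag at the end---whether the first congruence $a_0(b+1)+a-1\equiv 0\pmod n$ is automatic---does resolve: from $a=b-n/2-2$ one gets $a_0=b_0-n/8$, and substituting together with $(b_0+1)(b+1)\equiv 4\pmod n$ yields $a_0(b+1)+a-1\equiv -(n/2)(b_0+2)\equiv 0\pmod n$ since $b_0$ is even. The paper simply asserts ``one can verify'' for the converse direction, so your level of detail already exceeds it.
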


Using an analogous argument, one can prove (left up to the reader) the following result for the
edge $[v_0,v_a]$ being green rather than blue.

\begin{prop} If $\mathcal{M}_O(n,a,b)$ is feasible, $a>1$ and $[v_0,v_a]\in\mathcal{G}$, then it admits
an involutary automorphism swapping $\mathcal{B}$ and $\mathcal{R}$ if and only if
\begin{itemize}\item $8|n$,
\item $a=4a_0+3<(n+4)/4$ with $a_0$ even,
\item $4(a_0+1)^2\equiv 4(\mbox{\rm{mod} }n)$ and
\item $b=n/2+a-2$.
\end{itemize}
\end{prop}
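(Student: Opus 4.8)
The plan is to mirror the proof of the preceding proposition (the $[v_0,v_a]\in\mathcal{B}$ case) almost verbatim, exploiting the near-symmetry between the roles of $a$ and $b$. Let $\sigma$ be the involution fixing $u_0$ and swapping $u_1$ and $v_0$; arguing exactly as before, $\sigma$ must swap $\mathcal{B}$ with $\mathcal{R}$ (otherwise a non-identity automorphism fixes a vertex and all its neighbors), hence it swaps the blue-green $n$-cycle through $[u_0,u_1]$ with the red-green cycle through $[u_0,v_0]$. The difference is that now $[v_0,v_a]\in\mathcal{G}$, so the green edge leaving $v_0$ in the red-green cycle is $[v_0,v_a]$ rather than $[v_0,v_b]$; tracing that cycle as $[\ldots,u_{n-1},u_0,v_0,v_a,u_a,u_{a-1},v_{a-1},v_{2a-1},u_{2a-1},\ldots]$ forces its length to be $4n/\gcd(a-1,n)=n$, so $\gcd(a-1,n)=4$ and $4\mid n$.

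Next I would write the subscripts of all vertices as $4i+r$, $0\le i<n/4$, $0\le r\le 3$, and record the explicit formulas for $\sigma(u_{4i+r})$ and $\sigma(v_{4i+r})$ — these are obtained from the formulas in the previous proposition by interchanging the roles of $a$ and $b$ (so $b+1$ is replaced by $a-1$, and the residue shifts adjust accordingly). Bijectivity follows from $\gcd(a-1,n)=4$ and $\gcd(b-a,n)=2$. Then, writing $a=4a_0+1$ and $b=4b_0+3$ (or whichever parity split is consistent with $\gcd(a-1,n)=4$; in the analogue $a_0$ will be even), I would impose that $\sigma$ carries each edge of the form $[v_{4i},v_{4i+a}]$, $[v_{4i},v_{4i+b}]$, $[v_{4i+2},v_{4i+2+a}]$, $[v_{4i+2},v_{4i+2+b}]$ to an edge of $X$. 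As in the previous proof this yields two pairs of congruences; combining the "second" congruences of the two pairs gives $2(b-a-2)\equiv 0\ (\mathrm{mod}\ n)$, hence $b=n/2+a+2$ (since $b>a+2$), whence $8\mid n$, and then the remaining congruence becomes $4(a_0+1)^2\equiv 4\ (\mathrm{mod}\ n)$ with $a_0$ even. The inequality $a<b-2<n-a-2$ together with $b=n/2+a+2$ gives $a<(n+4)/4$.

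For the converse I would check that, given $8\mid n$, $a=4a_0+3<(n+4)/4$ with $a_0$ even, $4(a_0+1)^2\equiv 4\ (\mathrm{mod}\ n)$, and $b=n/2+a-2$, the parameters satisfy Theorem \ref{vt} and inequality (1), and that the explicitly defined $\sigma$ is indeed an automorphism swapping $\mathcal{B}$ and $\mathcal{R}$ — this is a routine but lengthy verification that all blue and green edges between $v$-vertices map correctly, entirely parallel to the previous proposition. The main obstacle is bookkeeping: one must be careful that the $a\leftrightarrow b$ substitution does not simply copy over, because the hypothesis "$[v_0,v_a]\in\mathcal{G}$" (equivalently "$[v_0,v_b]\in\mathcal{B}$") is the one that makes $\gcd(a-1,n)=4$ appear rather than $\gcd(b+1,n)=4$, and the constraint from Theorem \ref{vt} that is active is $(b-1)(b-a)/2\equiv 1-b$, i.e. $(b-1)(b-a+2)/2\equiv 0$, not the $a$-version; keeping track of which congruence from Theorem \ref{vt} is in force, and correctly deriving the parity of $a_0$, is where a mechanical transcription would go wrong.
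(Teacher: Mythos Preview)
Your overall plan is exactly what the paper does (it literally says ``using an analogous argument''), so the strategy is fine.  However, your execution contains a concrete sign error that propagates through the rest of the argument.

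In tracing the red--green cycle through $u_0$, you write
\[
[\ldots,u_{n-1},u_0,v_0,v_a,u_a,u_{a-1},v_{a-1},v_{2a-1},\ldots]
\]
and conclude $\gcd(a-1,n)=4$.  But the edge $[u_{a-1},u_a]$ is \emph{blue}, not green: since $a$ is odd, $a-1$ is even and $[u_{a-1},u_a]$ lies in the $G$-orbit of $[u_0,u_1]\in\mathcal{B}$.  The green edge at $u_a$ is $[u_a,u_{a+1}]$, so the red--green cycle actually reads
\[
[\ldots,u_{n-1},u_0,v_0,v_a,u_a,u_{a+1},v_{a+1},v_{2a+1},u_{2a+1},\ldots],
\]
forcing $\gcd(a+1,n)=4$.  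This is why the statement has $a=4a_0+3$ (so that $a+1=4(a_0+1)$), not $a=4a_0+1$.

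The same sign error contaminates your combination of the ``second'' congruences: the correct analogue yields $2(b-a+2)\equiv 0\pmod n$, hence $b=n/2+a-2$, matching the proposition---not $b=n/2+a+2$ as you wrote.  (You then silently switch to $b=n/2+a-2$ in the converse paragraph, which shows you sensed something was off.)  Once you fix the cycle trace to use $a+1$ rather than $a-1$, the formulas for $\sigma$ and the congruences fall out exactly parallel to Proposition~8.3, with $(a_0+1)(a+1)\equiv 4\pmod n$ giving $4(a_0+1)^2\equiv 4\pmod n$, and $8\mid n$ forcing $a_0$ even.
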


The preceding propositions cover all the subcases arising from part (ii) of Lemma \ref{orb}.
Part (iii) of Lemma \ref{orb} simply switches the roles of $\mathcal{B}$ and $\mathcal{G}$.
The arguments are analogues of those used in the above portion of Section 8 and we leave
them to the reader.  However, we did say we would provide a few hints and now fulfill that
promise.

There is an obvious analogue of Lemma \ref{orb} for which $\mathcal{G}\cup\mathcal{R}$
is substituted for $\mathcal{B}\cup\mathcal{R}$.  No hint is required for this as it is
a straightforward replacement argument.

The analogue of Proposition 8.2 is the third convoluted situation mentioned in the proof
outline, but the comments are essentially the same as those written for Proposition 8.2.
To obtain the analogue of Proposition 8.2, use the same $\eta$ but let $g\in G$ be the
automorphism switching $u_0$ and $u_1$ and examine the 10-cycle \[[u_0,u_1,v_1,v_{1-a},
v_{1+b-a},u_{1+b-a},u_{b-a},v_{b-a},v_b,v_0,u_0].\]  We then obtain the following result.

\begin{prop} If $X=\mathcal{M}_O(n,a,b)$ is feasible, $a>1$, both $\mathcal{R}\cup
\mathcal{G}$ and $\mathcal{B}$ are orbits of $\mathrm{Aut}(X)$ acting on the edges,
and there is a non-identity automorphism of $X$ fixing $u_0$ and its three neighbors,
then $n\equiv 48(\mbox{\rm mod }96)$, $a=n/4-3$ and $b=n/2-1$.  Moreover, if the
parameters for $X$ are the preceding, then $X$ is not $\mathcal{F}(1,2)$-invariant.
\end{prop}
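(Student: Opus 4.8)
The proof follows the pattern of Proposition 8.2 with the roles of $\mathcal{B}$ and $\mathcal{G}$ interchanged, so the overall strategy is a careful mirror image of the argument given in detail earlier in Section 8. First I would set up the subcase: we are in part (iii) of Lemma~\ref{orb}, so $\mathcal{R}\cup\mathcal{G}$ and $\mathcal{B}$ are the edge orbits, and by the analogue of Lemma~\ref{8c} we may assume the red-green cycles have length $8$ and there is a non-identity automorphism $\eta$ fixing $u_0$ and its three neighbours. Examining the red-green $8$-cycle through $u_0$ (the analogue of the computation showing $b=(n+2)/2$ in the blue case) will force $b=(n+2)/2$ and hence $4\mid n$.

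Next, as suggested by the hint, I would take the same $\eta$ but now let $g\in G$ be the automorphism switching $u_0$ and $u_1$, and set $\theta=\eta g\eta g$. As in the blue case, $\theta$ fixes $u_0$ and $u_1$ and contains the transpositions one expects; then I would trace the action of $\theta$ around the $10$-cycle
\[[u_0,u_1,v_1,v_{1-a},v_{1+b-a},u_{1+b-a},u_{b-a},v_{b-a},v_b,v_0,u_0]\]
exactly as the earlier argument traced the corresponding $10$-cycle $[u_1,u_2,v_2,v_{2+a},\ldots]$. Chasing green edges from both ends of this cycle and using that there are no red edges between vertices of $V$, one is driven (again up to discarding a degenerate possibility that would force $a=3$ and contradict feasibility) to a congruence of the shape $2a\equiv n/2 - 6 \ (\mathrm{mod}\ n)$ or equivalently $2a \equiv n/2 + \text{(small even constant)}$; combined with $a<n/2$, $a$ odd, and $4\mid n$ this yields $8\mid n$ and $a=n/4-3$, and then $b=b-a+a=(n/4-2)+(n/4-3)$ together with $b=(n+2)/2$ is consistent, giving $b=n/2-1$. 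Imposing the remaining hypotheses of Theorem~\ref{vt}, namely $\mathrm{gcd}(b-a,n)=2$ and $(b-a)^2/2\equiv 2\ (\mathrm{mod}\ n)$ on the parameters $a=n/4-3$, $b=n/2-1$ (so $b-a=n/4+2$), sharpens $8\mid n$ to $n\equiv 48\ (\mathrm{mod}\ 96)$, exactly as the factor of $3$ entered in Proposition~8.2.

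For the ``moreover'' part I would exhibit the automorphism explicitly. Mirroring the displayed formulas for $\theta$ in the blue case, one writes vertex subscripts as $6i+r$ with $0\le i<n/6$, $0\le r<6$ (the divisibility $6\mid n$ being available since $n\equiv 48\ (\mathrm{mod}\ 96)$), defines $\theta$ on the $u$'s and $v$'s by the analogous piecewise formulas with $\tfrac{n}{4}+1$ shifts, and verifies directly that under $n\equiv 48\ (\mathrm{mod}\ 96)$, $a=n/4-3$, $b=n/2-1$ this is an automorphism of $\mathcal{M}_O(n,a,b)$; since it fixes $u_0$ but is non-identity, Lemma~\ref{reg} gives $|\mathrm{Aut}(X)|>2n$ and hence $X$ is not $\mathcal{F}(1,2)$-invariant. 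The main obstacle is the same bookkeeping hazard as in Proposition~8.2: keeping the green-edge chase around the $10$-cycle consistent from both ends and not dropping the branch that collapses to $a=3$, together with correctly propagating the reflection of successive red-green $8$-cycles to pin down the period and the explicit formula for $\theta$; none of this is conceptually new, but the index shifts must be tracked with care.
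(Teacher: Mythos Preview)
Your plan is correct and matches the paper's approach exactly: the paper's own proof of Proposition~8.5 is just the hint you are following, namely mirror Proposition~8.2 using the same $\eta$, take $g\in G$ switching $u_0$ and $u_1$, set $\theta=\eta g\eta g$, and chase the 10-cycle $[u_0,u_1,v_1,v_{1-a},v_{1+b-a},u_{1+b-a},u_{b-a},v_{b-a},v_b,v_0,u_0]$.

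Two small slips to fix when you write it out. First, the red--green $8$-cycle through $u_0$ (with $[v_0,v_b]\in\mathcal{G}$) is $[u_0,v_0,v_b,u_b,u_{b+1},v_{b+1},v_{2b+1},u_{2b+1},u_0]$, which gives $2(b+1)\equiv 0\pmod n$ and hence $b=n/2-1$ directly, not $b=(n+2)/2$; your later arithmetic ``$b=(n/4-2)+(n/4-3)$'' is also off (it should be $(n/4+2)+(n/4-3)=n/2-1$). Second, the conditions from Theorem~\ref{vt} with $b-a=n/4+2$ only sharpen to $n\equiv 16\pmod{32}$; the divisibility by $3$ (and hence $n\equiv 48\pmod{96}$) comes from the period-$6$ structure of $\theta$ that you discover by propagating its action through successive red--green $8$-cycles, exactly as in Proposition~8.2. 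Neither slip affects the strategy.
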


The analogue of Proposition 8.3 is obtained by letting $\sigma$ switch $u_{n-1}$ and $v_0$
rather than $u_1$ and $v_0$.  The result is then the following.

\begin{prop} If $\mathcal{M}_O(n,a,b)$ is feasible, $a>1$ and $[v_0,v_a]\in\mathcal{B}$, then
it admits an involutary automorphism swapping $\mathcal{G}$ and $\mathcal{R}$ if and only if
\begin{itemize}\item $8|n$,
\item $a=4a_0+1<(n-4)/4$ with $a_0$ odd,
\item $4(a_0)^2\equiv 4(\mbox{\rm{mod} }n)$ and
\item $b=n/2+a+2$.
\end{itemize}
\end{prop}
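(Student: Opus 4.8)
The plan is to mirror the proof of Proposition~8.3 (the $[v_0,v_a]\in\mathcal{B}$ case for swapping $\mathcal{B}$ and $\mathcal{R}$), replacing the involution $\sigma$ that fixes $u_0$ and switches $u_1,v_0$ with the involution $\sigma$ that fixes $u_0$ and switches $u_{n-1},v_0$. As in that earlier argument, we are in case (iii) of Lemma~\ref{orb}, so $\mathcal{R}\cup\mathcal{G}$ and $\mathcal{B}$ are the edge orbits of $\mathrm{Aut}(X)$, the stabilizer of $u_0$ has order $2$, and its non-identity element $\sigma$ must swap $\mathcal{R}$ and $\mathcal{G}$ globally (otherwise some automorphism would fix a vertex and all three of its neighbours, contradicting that $\mathcal{B}$ is an orbit distinct from $\mathcal{R}\cup\mathcal{G}$). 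First I would trace the consequence: $\sigma$ swaps the blue-green cycle through $[u_0,u_1]$ with the blue-red cycle through $[u_0,v_0]$ — wait, more precisely, since $\sigma$ fixes $[u_0,u_{n-1}]\in\mathcal B$ and swaps green/red, it swaps the red-green cycle through $u_0$ with some other red-green cycle, and one tracks the length of the relevant cycles to extract the divisibility condition $4\mid n$ together with $\mathrm{gcd}(a-1,n)=4$ (the analogue of $\mathrm{gcd}(b+1,n)=4$, with the roles of $a$ and $b$ and of $+$ and $-$ interchanged relative to Proposition~8.3).

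Next I would write out the explicit formula for $\sigma$ on vertices indexed as $4i+r$, $0\le i<n/4$, $0\le r\le 3$, exactly as in Proposition~8.3 but with $a-1$ in place of $b+1$ and the appropriate sign changes dictated by $[v_0,v_a]\in\mathcal{B}$ and $\sigma$ fixing $u_{n-1}$ rather than $u_1$. Bijectivity of this map follows from $\mathrm{gcd}(a-1,n)=4$ and $\mathrm{gcd}(b-a,n)=2$ just as before. Then I would impose that $\sigma$ carries the remaining edges among the $v_i$ — namely $[v_{4i},v_{4i+a}]$, $[v_{4i},v_{4i+b}]$, $[v_{4i+2},v_{4i+2+a}]$, $[v_{4i+2},v_{4i+2+b}]$ — to edges of $X$. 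Writing $a=4a_0+1$ and $b=4b_0+3$ (here $a$ is of the form $4a_0+1$ because $a\equiv 1\pmod 4$ in this branch, which is exactly why the statement reads $a=4a_0+1$ with $a_0$ odd), this produces two pairs of congruences analogous to (2) and (3); combining their second conditions forces $b=n/2+a+2$, and then $\mathrm{gcd}(b-a,n)=2$ with $4\mid n$ gives $8\mid n$, while the surviving congruence becomes $4a_0^2\equiv 4\pmod n$ and the constraint $a<(n-4)/4$ comes from feasibility (inequality (1)) together with $b<n-a$.

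For the converse direction I would check that, assuming $8\mid n$, $a=4a_0+1<(n-4)/4$ with $a_0$ odd, $4a_0^2\equiv 4\pmod n$, and $b=n/2+a+2$, the parameters satisfy $\mathrm{gcd}(b-a,n)=2$, the congruences of Theorem~\ref{vt}, and inequality (1), so that $X$ is feasible; and then that the explicitly defined $\sigma$ is indeed an automorphism swapping $\mathcal{G}$ and $\mathcal{R}$ — this is the routine verification that it respects the four families of $vv$-edges, which the congruences guarantee. I expect the only real obstacle to be bookkeeping: keeping the index shifts, the parity of $a_0$, and the $+$/$-$ asymmetry (arising because $\sigma$ now fixes $u_{n-1}$ instead of $u_1$, reversing the orientation along $C_1$) consistent throughout, so that one lands on $4a_0^2\equiv 4$ with $a\equiv 1\pmod 4$ rather than the $4(b_0+1)^2\equiv 4$ with $b\equiv 3\pmod 4$ of Proposition~8.3. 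Since the paper explicitly delegates this analogue to the reader, in the write-up I would present the statement of $\sigma$, note bijectivity, state the resulting congruences, and indicate that the reduction and the converse are entirely parallel to the proof of Proposition~8.3.
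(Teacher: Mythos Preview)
Your proposal is correct and follows exactly the route the paper indicates: the paper's entire ``proof'' of this proposition is the one-line hint that the analogue of Proposition~8.3 is obtained by letting $\sigma$ switch $u_{n-1}$ and $v_0$ rather than $u_1$ and $v_0$, and you carry this out, correctly identifying that the blue--green cycle $C_1$ is now swapped with the blue--red cycle (so the relevant jump is $a-1$ in place of $b+1$, giving $\gcd(a-1,n)=4$, hence $a=4a_0+1$), and that the analogues of congruences~(2) and~(3) collapse to $b=n/2+a+2$, $8\mid n$, $a_0$ odd, and $4a_0^2\equiv 4\pmod n$. One small presentational point: you introduce $b=4b_0+3$ before deriving $b=n/2+a+2$, but the residue of $b$ modulo~$4$ is not known until that derivation (together with $8\mid n$) is in hand, so in a clean write-up you should either postpone that notation or note that it is provisional.
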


Finally, the analogue of Proposition 8.4 is obtained by again letting $\sigma$ switch $u_{n-1}$ and $v_0$
rather than $u_1$ and $v_0$.  The result is then the following.

\begin{prop} If $\mathcal{M}_O(n,a,b)$ is feasible, $a>1$ and $[v_0,v_a]\in\mathcal{G}$, then it admits
an involutary automorphism swapping $\mathcal{G}$ and $\mathcal{R}$ if and only if
\begin{itemize}\item $8|n$,
\item $b=4b_0+1<(3n-4)/4$ with $b_0$ odd,
\item $4b_0^2\equiv 4(\mbox{\rm{mod} }n)$ and
\item $a=b-n/2+2$.
\end{itemize}
\end{prop}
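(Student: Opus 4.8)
The plan is to mirror, essentially verbatim, the derivation that produced Proposition 8.3, but with the single structural change that the involution $\sigma$ now swaps $u_{n-1}$ and $v_0$ (equivalently, it fixes $u_0$ and reflects $C_1$ in the opposite direction) and that the edge $[v_0,v_a]$ is \emph{green} rather than blue. First I would record the standing hypotheses: $X=\mathcal{M}_O(n,a,b)$ is feasible, $a>1$, both $\mathcal{R}\cup\mathcal{G}$ and $\mathcal{B}$ are orbits of $\mathrm{Aut}(X)$, $[v_0,v_a]\in\mathcal{G}$, and the stabilizer of $u_0$ is generated by an involution $\sigma$ that must interchange $\mathcal{G}$ and $\mathcal{R}$ (otherwise a non-identity automorphism would fix a vertex together with all three of its neighbors, contradicting the analogue of Lemma~\ref{8c}). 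Since $[v_0,v_a]\in\mathcal{G}$, the roles of $a$ and $b$ are interchanged relative to Proposition~8.3: it is now $b$ that plays the part formerly played by $a$ in describing the red-green cycle, and the relevant congruence from Theorem~\ref{vt} is the one involving $b$.

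Next I would trace the red-green cycle through $[u_0,v_0]$. Because $\sigma$ swaps $\mathcal{R}$ and $\mathcal{G}$, it swaps the blue-green cycle of length $n$ through $[u_0,u_1]$ with the red-green cycle through $[u_0,v_0]$, forcing that latter cycle to have length $n$. Since $[v_0,v_b]$ and $[u_b,u_{b+1}]$ are its green edges, the cycle visits $u_0,v_0,v_b,u_b,u_{b+1},v_{b+1},v_{2b+1},u_{2b+1},\ldots$, so its length is $4n/\gcd(b+1,n)$, giving $\gcd(b+1,n)=4$ and $4\mid n$. (Here one uses instead $\gcd(a+1,n)=4$ if one writes it the other way — I will need to be careful which parameter indexes the jump; because $[v_0,v_a]\in\mathcal{G}$ it is the $b$-edge that is blue between the $v_i$, so the cycle is traced with $a$-jumps and we get $\gcd(a+1,n)=4$. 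I would resolve this bookkeeping first, since getting it backwards propagates everywhere.) With the correct parameter pinned down, I would then write vertex subscripts in the form $4i+r$, $0\le i<n/4$, $0\le r\le3$, and write out the explicit formulas for $\sigma(u_{4i+r})$ and $\sigma(v_{4i+r})$ exactly as in Proposition~8.3 but with $a$ and $b$ exchanged, check that $\gcd(a+1,n)=4$ and $\gcd(b-a,n)=2$ make this a bijection, and then impose the condition that $\sigma$ carry each edge $[v_j,v_{j+a}]$, $[v_j,v_{j+b}]$ ($j$ even) to an edge of $X$.

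That last step yields two pairs of congruences — the analogues of (2) and (3) — one pair from the $4i$ and $4i+a,4i+b$ edges and one from the $4i+2$ edges. Writing $a=4a_0+1$ (the analogue of $b=4b_0+3$ in 8.3; since $a$ is now the parameter with $\gcd(a+1,n)=4$, it is $a\equiv 3\pmod 4$, so in fact $a=4a_0+3$) and the other as $b=4b_0+\varepsilon$, I would combine the ``second'' congruences of the two pairs to get $2(b-a-2)\equiv 0\pmod n$ — wait, in the green case it should come out as $2(\text{something})\equiv 0$ forcing $b=n/2+a-2$ rather than $b=n/2+a+2$; this sign is the analogue of the difference between Propositions 8.3 and 8.4, and pinning it down is where I expect the only real subtlety. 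Using $8\mid n$ (from $\gcd(b-a,n)=2$ together with $4\mid n$) the remaining congruence collapses to $4(a_0+1)^2\equiv 4\pmod n$, matching the stated conclusion. For the converse I would simply verify, by substitution, that whenever $8\mid n$, $a=4a_0+3<(n+4)/4$ with $a_0$ even, $4(a_0+1)^2\equiv 4\pmod n$, and $b=n/2+a-2$, the explicit $\sigma$ written above is a well-defined automorphism swapping $\mathcal{G}$ and $\mathcal{R}$; this is a routine (if tedious) edge-by-edge check that feasibility and the congruences have been arranged precisely to make work. The main obstacle, as indicated, is purely clerical: keeping the $a\leftrightarrow b$ swap and the accompanying sign changes straight so that the final parameter list comes out as the one asserted rather than as the Proposition~8.4 list; once the correct substitution dictionary between this proof and the proof of Proposition~8.3 is fixed, every step is an exact analogue.
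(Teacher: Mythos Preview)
Your plan is sound in outline, but the execution contains a concrete error that makes the derivation land on the wrong set of conditions. The mistake is here: ``Because $\sigma$ swaps $\mathcal{R}$ and $\mathcal{G}$, it swaps the blue-green cycle of length $n$ through $[u_0,u_1]$ with the red-green cycle through $[u_0,v_0]$.'' That is false. If $\sigma$ interchanges $\mathcal{G}$ and $\mathcal{R}$ while fixing $\mathcal{B}$, then the blue-green cycle $C_1$ is carried to the \emph{blue-red} cycle through $u_0$, not the red-green one. This single slip is exactly the ``bookkeeping'' hazard you flagged, and it propagates everywhere.

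Tracing the correct (blue-red) cycle from $u_0$: the blue edge leads to $u_1$, then red to $v_1$; since $[v_0,v_a]\in\mathcal{G}$ forces $[v_0,v_b]\in\mathcal{B}$, the blue edge at $v_1$ is the $b$-jump to $v_{1-b}$, then red to $u_{1-b}$, blue to $u_{2-b}$, and so on. The subscript drops by $b-1$ every four steps, so the cycle has length $4n/\gcd(n,b-1)$, and equating this to $n$ gives $\gcd(n,b-1)=4$, whence $b=4b_0+1$. Writing subscripts as $4i+r$ and carrying through the analogue of (2) and (3) now yields $b_0$ odd, $4b_0^2\equiv 4\pmod n$, and $a=b-n/2+2$ --- the stated conditions of the proposition. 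Your derivation instead traced $a$-jumps along a red-green cycle and produced $a=4a_0+3$ with $a_0$ even, $4(a_0+1)^2\equiv 4\pmod n$, $b=n/2+a-2$: these are precisely the conditions of Proposition~8.4, not of the present proposition, so your claim that they ``match the stated conclusion'' is simply wrong. (The paper, incidentally, suggests obtaining this result as the analogue of Proposition~8.4 --- which already has $[v_0,v_a]\in\mathcal{G}$ --- by changing only the swapped pair to $u_{n-1},v_0$; making both substitutions at once from 8.3 is what led you astray.)
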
 

The conditions from the results in this section are compiled into a single entry for the
collection $\mathcal{C}$.


\begin{thebibliography}{9999}
\bibitem{A3} B. Alspach, Honeycomb Toroidal Graphs, {\sl Bull. Inst. Combin. Appl.}
{\bf 91} (2021), 94--114.
\bibitem{A2} B. Alspach and M. Dean, Honeycomb toroidal graphs are Cayley graphs, {\sl
Inform. Process. Lett.} {\bf 109} (2009), 705--708.
\bibitem{A1} B. Alspach, A. Khodadapour and D. L. Kreher, On factor-invariant graphs, {\sl Discrete
Math.} {\bf 342} (2019), 2173--2178.
\bibitem{B2}  W. Bosma, J. Cannon and C. Playoust, The MAGMA algebra system I: the user
language, {\sl J. Symbolic Comput.} {\bf 24} (1997), 235--265.
\bibitem{B1} I.Z. Bouwer (Ed.), {\sl The Foster Census}, Winnipeg, 1988.
\bibitem{C2} S. Cavior, The subgroups of the dihedral group, {\sl Math. Mag.} {\bf 48} (1975),
107--107. 
\bibitem{F2} Y. Q. Feng and K. Wang, $s$-regular cyclic coverings of the three-dimensional
hypercube $Q_3$, {\sl European J. Combin.} {\bf 24} (2003), 719--731.
\bibitem{F3} B. Frelih and K. Kutnar, Classification of cubic symmetric tetracirculants and
pentacirculants, {\sl European J. Combin.} {\bf 34} (2013), 169--194.
\bibitem{F1} R. Frucht, J. Graver and M. Watkins, The groups of the generalized Petersen graphs,
{\sl Proc. Cambridge Phil. Soc.} {\bf 70} (1971), 211--218.
\bibitem{J1} K. Jasen\v{c}\'akov\'a, R. Jajcay and T. Pisanski, A new generalisation of generalised
Petersen graphs, {\sl Art Discrete Appl. Math.} {\bf 3} (2020), \#P1.04.
\bibitem{S1} P. \v{S}parl, Symmetries of the Honeycomb toroidal graphs, {\sl J. Graph Theory},
to appear.
\bibitem{T1} W. Tutte, {\sl Connectivity in Graphs}, University of Toronto Press, Toronto, 1966.
  
\end{thebibliography}
\end{document}